\theoremstyle{plain}
\newtheorem{mythm}{Theorem}[section]
\newtheorem{myprop}[mythm]{Proposition}
\newtheorem{mylemma}[mythm]{Lemma}
\newtheorem{mycor}[mythm]{Corollary}
\theoremstyle{definition}
\newtheorem{mydef}[mythm]{Definition}
\newtheorem{myrmk}[mythm]{Remark}
\newcommand{\ol}[1]{\overline{#1}}
\newcommand{\ot}[1]{\widetilde{#1}}
\newcommand{\Rar}{\Rightarrow}
\newcommand{\rar}{\rightarrow}
\newcommand{\gen}[1]{{\langle #1 \rangle}}
\newcommand{\pres}[2]{\langle #1 \ |\  #2\rangle}
\newcommand{\abs}[1]{|#1|}
\newcommand{\norma}[1]{\|#1\|}
\newcommand{\bC}{\mathbb C}
\newcommand{\bR}{\mathbb R}
\newcommand{\bZ}{\mathbb Z}
\newcommand{\bN}{\mathbb N}
\newcommand{\dpar}{\partial}
\newcommand{\id}{\text{id}}
\newcommand{\len}{\mathrm{Len}}
\newcommand{\area}{\mathrm{Area}}
\newcommand{\scl}[1]{\mathrm{scl}(#1)}
\newcommand{\mappapi}{b\text{\nobreakdash-projection}}
\newcommand*{\Linfs}{{\scriptscriptstyle{(\infty)}}} 
\newcommand*{\st}{\colon\,}
\newcommand\extrafootertext[1]{%
	\bgroup
	\renewcommand\thefootnote{\fnsymbol{footnote}}%
	\renewcommand\thempfootnote{\fnsymbol{mpfootnote}}%
	\footnotetext[0]{#1}%
	\egroup
}
\title{Weakly bounded cohomology classes and a counterexample to a conjecture of Gromov}
\author{
Dario Ascari\\
{\small \textit{Mathematical Institute, Andrew Wiles Building,}}\\
{\small \textit{University of Oxford, Oxford OX2 6GG, UK}}\\
{\small e-mail: \texttt{ascari.maths@gmail.com}}\\
\and
Francesco Milizia\\
{\small \textit{Scuola Normale Superiore, Palazzo della Carovana,}}\\
{\small \textit{Piazza dei Cavalieri, 7, 56126 Pisa, IT}}\\
{\small email: \texttt{francesco.milizia@sns.it}}\\
\\
}
\date{}
\begin{document}

\maketitle

\begin{abstract}
We exhibit a group of type F whose second cohomology contains a weakly bounded, but not bounded, class.
As an application, we disprove a long-standing conjecture of Gromov about bounded primitives of differential forms on universal covers of closed manifolds.
\end{abstract}

\section{Introduction}
\extrafootertext{Ascari was funded by the Engineering and Physical Sciences Research Council.}
\extrafootertext{Milizia has been supported by the INdAM -- GNSAGA project CUP E55F22000270001.}
Let $G$ be a discrete group.
We consider the cohomology of $G$ with coefficients in $A = \bZ$ or $A = \bR$.
In particular, to compute $H^*(G;A)$ we take the bar resolution
\[\begin{tikzcd}
    0 \ar[r] & C^0(G;A) \ar[r,"\delta"] & C^1(G;A) \ar[r,"\delta"] & C^2(G;A) \ar[r,"\delta"] & \dots,
\end{tikzcd}\]
where $C^k(G;A)$ denotes, for every $k \ge 0$, the group of arbitrary set maps from $G^k$ to $A$, and the coboundary maps are defined by the formula
\[(\delta\alpha)(g_1, \dots, g_{k+1})\ =\ \alpha(g_2, \dots, g_{k+1}) + \sum_{i=1}^k (-1)^i \alpha(\dots, g_ig_{i+1}, \dots) + (-1)^{k+1}\alpha(g_1, \dots, g_k).\]

A cohomology class in $H^k(G;A)$ is \textbf{bounded} if it has a representative $\alpha:G^k \to A$ which is bounded, i.e., whose image is a bounded subset of $A$.
We say that a $k$-cochain $\alpha$ is \textbf{weakly bounded} if $\alpha(G,g_2,\dots,g_k)$ is a bounded subset of $A$ for every $g_2, \dots, g_k \in G$.
An element of $H^k(G;A)$ is weakly bounded if it is represented by a weakly bounded cocycle.

Of course, bounded cohomology classes are also weakly bounded, and in degrees $0$ and $1$ the two notions coincide (already at the cochain level).
Neumann and Reeves, motivated by applications in the study of the coarse geometry of central extensions (see below for a brief description of this connection), asked in \cite{NR1996,NR1997} whether a weakly bounded $2$-class on a finitely generated group is always bounded.
Essentially the same question was considered by Whyte in \cite[Remark 2.16]{Whyte2010}.

Intimately related questions were posed by Wienhard and Blank, respectively in \cite[Question 8]{Wienhard2012} and \cite[Question 6.3.10]{Blank2015}.
They asked under what conditions on $G$ a certain sequence of natural maps involving the bounded cohomology, the ordinary cohomology and the $\ell^\infty$-cohomology of $G$ (in some degree $k$) is exact; as shown by Frigerio and Sisto \cite[Proposition 11]{FS2020}, this is equivalent to asking under what conditions on $G$ weakly bounded classes in $H^k(G;A)$ are bounded.
We refer to Section \ref{sec:cohomology} for the definitions of bounded and $\ell^\infty$-cohomology and the precise reformulation of the question, which also comes in handy in later sections.

\

The main result of our paper is the following:
\begin{mythm}\label{mainthm}
    There exists a group $G$ with the following properties:
    \begin{enumerate}
    	\item \label{it:wbnb} There is a cohomology class in $H^2(G;\bZ)$ which is weakly bounded but not bounded;
    	\item \label{it:fp} $G$ is of type F (and in particular finitely presented);
    	\item $G$ is CAT(0).
    \end{enumerate}
\end{mythm}
Frigerio and Sisto provide in \cite[Corollary 10]{FS2020} a finitely generated, but not finitely presented, group satisfying property \ref{it:wbnb}.
Our improvement --- in particular, the fact that we have a finitely \emph{presented} example --- allows us to disprove a conjecture that was proposed by Gromov in \cite{Gromov1993}.
See below for a discussion of this and other applications of our result.
Using standard techniques, starting from Theorem \ref{mainthm} we also construct groups satisfying properties \ref{it:wbnb} and \ref{it:fp} that are non-elementary relatively hyperbolic (see Corollary \ref{cor:rel-hyp}).

\begin{myrmk}
	For a quite large and diverse family of groups $G$, weakly bounded cohomology classes in $H^2(G;\bZ)$ \emph{are} bounded.
	As proved by Frigerio and Sisto \cite{FS2020}, this family is closed under direct and free products (and also some amalgamated products), and includes amenable groups, relatively hyperbolic groups with respect to a finite family of amenable subgroups, right-angled Artin groups and fundamental groups of compact orientable $3$-manifolds.
	
	In higher degrees the situation is different: for every $k \ge 3$ it is easy to construct groups $G$ of type F with weakly bounded but not bounded cohomology classes in $H^k(G;\bZ)$.
	Such examples are given, e.g., by $G = \bZ^{k-2} \times \pi_1(\Sigma_2)$, where $\Sigma_2$ is a closed orientable surface of genus $2$ (see \cite[Corollary 3.3]{FS2020}).
\end{myrmk}

\subsection*{Outline of the proof}
Our construction is unrelated to the finitely generated example by Frigerio and Sisto.
In fact, the property of not being finitely presentable plays a fundamental role in their construction, making it unlikely to manufacture a finitely presented --- let alone of type F --- example by a simple adaptation of their method.

Our example has the following presentation:
\begin{equation}\label{eq:presentation}
	G = \pres{a_1, t_1, a_2, t_2, b}{[t_1,a_1] = [t_2,a_2] = b, [t_1,b] = [t_2,b] = 1},
\end{equation}
where $[x,y]=xyx^{-1}y^{-1}$ denotes the commutator of two elements.
This group $G$ is obtained as an amalgamated free product $H*_\gen{b}H$ of two copies of $H=\pres{a,b,t}{[t,a]=b, [t,b]=1}$ along the infinite cyclic subgroup $\gen{b}$.
The stable commutator length of $b$ in $H$ is equal to $0$, and this is the key property that allows to produce a certain class in $H^2(G;\bZ)$ which is not bounded.
In order to show that the same class is weakly bounded, we rely on a result of \cite{Mil2021}, reducing to prove a certain combinatorial isoperimetric inequality in the Cayley graph of $G$.
To prove that $G$ is CAT(0) and of type F, we explicitly build a finite $2$-dimensional simplicial complex, obtained by gluing a finite number of regular Euclidean triangles, and then show that it is locally CAT(0).

\subsection*{Motivation and applications}
We now discuss some corollaries of Theorem \ref{mainthm}, which also gave us the motivation to study the relation between bounded and weakly bounded classes.

\paragraph{Bounded primitives and Gromov's conjecture}
Let $M$ be a smooth Riemannian manifold.
A smooth differential form $\omega \in \Omega^k(M)$ is bounded if $\sup_{p \in M} \norma{\omega(p)} < +\infty$, where $\norma{\omega(p)}$ is the operator norm of $\omega(p):\wedge^kT_p(M) \to \bR$.

Following \cite{Gromov1991}, we say that $\omega$ is \textbf{$\bm{d}$(bounded)} if it is the exterior differential $\omega = d\eta$ of a bounded form $\eta$.
Let $\ot{M}\to M$ be the universal cover, and endow $\ot{M}$ with the pull-back Riemannian metric.
Then $\omega \in \Omega^k(M)$ is said to be \textbf{$\bm{\ot{d}}$(bounded)} if its pull-back $\ot{\omega} \in \Omega^k(\ot{M})$ is $d$(bounded).

Suppose now that $M$ is a closed connected Riemannian manifold, and let $[\omega] \in H_\mathrm{dR}^2(M;\bR) \cong H^2(M;\bR)$ be a cohomology class represented by a smooth differential $2$-form $\omega$.
Gromov conjectured \cite[page 138]{Gromov1993} that the following conditions are equivalent:
\begin{enumerate}[label=(\roman*)]
    \item\label{it:dbound} $\omega$ is $\ot{d}$(bounded);
    \item\label{it:bound} The class $[\omega] \in H_\mathrm{dR}^2(M;\bR) \cong H^2(M;\bR)$ is represented by a bounded singular cocycle.
\end{enumerate}
From the compactness of $M$ it follows that the first condition does not depend on the differential form $\omega$ representing the class, nor on the Riemannian metric of $M$.
The implication \ref{it:bound} $\Rar$ \ref{it:dbound} holds in any degree, and a self-contained proof of this fact is given by Sikorav \cite{Sik01}.
It is shown by Frigerio and Sisto \cite[Corollary 20]{FS2020} that if a manifold $M$ satisfies Gromov's statement for every $[\omega] \in H_\mathrm{dR}^2(M;\bR)$, then every weakly bounded class in $H^2(\pi_1(M);\bZ)$ is bounded.
Therefore, a counterexample to the conjecture is given by any closed Riemannian manifold whose fundamental group has property \ref{it:wbnb} of Theorem \ref{mainthm}.
Since every finitely presented group can be realized, up to isomorphism, as the fundamental group of a closed manifold, Theorem \ref{mainthm} provides counterexamples for Gromov's conjecture.
In Section \ref{sec:cat0}, exploiting the stronger type-$F$ property, we obtain an \emph{aspherical} counterexample:
\begin{mycor}\label{cor:false-conjecture}
    There exists a closed connected aspherical Riemannian manifold $M$ with a differential $2$-form $\omega \in \Omega^2(M)$ such that \ref{it:dbound} holds but \ref{it:bound} does not.
\end{mycor}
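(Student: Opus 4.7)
The plan is to realize $G$ as a retract of the fundamental group of a closed aspherical Riemannian manifold, transport the weakly-bounded-but-not-bounded class from Theorem~\ref{mainthm} along this retraction, and invoke the contrapositive of \cite[Corollary~20]{FS2020}. By Proposition~\ref{prop:cat0}, $G$ admits a finite two-dimensional $K(G,1)$-complex $Y$. First I would embed $Y$ as a subcomplex of $\bR^N$ for $N$ large and take a regular neighborhood $W$: this is a compact PL manifold with boundary that deformation retracts onto $Y$, hence is aspherical with $\pi_1(W) = G$.

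Next I would apply Davis's reflection group trick to $W$, equipping $\partial W$ with the mirror structure coming from the closed vertex stars of its barycentric subdivision; since any barycentric subdivision is automatically a flag complex, the resulting Davis manifold $M$ is both closed (because $\partial W$ is a manifold) and aspherical (by the standard asphericity criterion for reflection-group constructions). I would then smooth $M$ and fix a Riemannian metric. The key feature of the construction is that $M$ is presented as a quotient of $W_L \times W$, with $W_L$ the associated right-angled Coxeter group, by an equivalence relation that only identifies points with equal $W$-coordinate. Hence the second-coordinate projection descends to a continuous retraction $\rho \colon M \to W$ of the natural inclusion $x \mapsto [(e,x)]$, which at the level of fundamental groups gives a retraction $r \colon \pi_1(M) \to G$ with section $s$.

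Let $\alpha \in H^2(G;\bZ)$ be the weakly bounded but not bounded class provided by Theorem~\ref{mainthm}. Pullback along any group homomorphism preserves weak boundedness at the cocycle level, so $r^*\alpha$ is weakly bounded in $H^2(\pi_1(M);\bZ)$; and if $r^*\alpha$ were bounded, then $\alpha = s^*r^*\alpha$ would be bounded as well, a contradiction. Therefore $H^2(\pi_1(M);\bZ)$ contains a weakly bounded class that is not bounded. By \cite[Corollary~20]{FS2020}, $M$ fails to satisfy Gromov's conjecture; since the implication \ref{it:bound} $\Rar$ \ref{it:dbound} is unconditional, the failure must occur in the opposite direction, producing a form $\omega \in \Omega^2(M)$ satisfying \ref{it:dbound} but not \ref{it:bound}.

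The main obstacle is not any individual computation but the correct packaging of Davis's reflection trick, ensuring simultaneously that $M$ is a closed manifold, that it is aspherical, and that it comes equipped with a continuous retraction onto the thickened $K(G,1)$. Once this geometric input is in place, the transport of (weak) boundedness across the group retraction and the final appeal to \cite[Corollary~20]{FS2020} are entirely formal.
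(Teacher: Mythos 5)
Your proposal follows essentially the same route as the paper: thicken the finite $K(G,1)$ from Proposition \ref{prop:cat0} to a compact aspherical manifold with boundary (the paper uses $4$-dimensional handles where you use a regular neighborhood in $\bR^N$), apply Davis's reflection group trick to get a closed aspherical manifold retracting onto it, transport the weakly bounded non-bounded class along the retraction and its section, and conclude via \cite[Corollary 20]{FS2020}. The only differences are minor packaging details (your explicit mirror structure and the smoothing step, which the paper avoids by working with a smooth handle thickening from the start), so the argument matches the paper's proof.
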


Using a result of Belegradek \cite{Belegradek2006}, we can also tweak the statement of Corollary \ref{cor:false-conjecture} by requiring that the fundamental group of $M$ be non-elementary relatively hyperbolic.
In particular, considering the fundamental group $\pi_1(M)$, we get the following:
\begin{mycor}\label{cor:rel-hyp}
    There exists a group $G$ of type $F$ which is non-elementary relatively hyperbolic and has a cohomology class in $H^2(G;\bZ)$ which is weakly bounded but not bounded.
\end{mycor}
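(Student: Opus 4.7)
The plan is to combine Corollary~\ref{cor:false-conjecture} with Belegradek's construction \cite{Belegradek2006} to obtain an aspherical Riemannian counterexample to Gromov's conjecture whose fundamental group is non-elementary relatively hyperbolic; then by \cite[Corollary~20]{FS2020} this fundamental group is the sought $G$.

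The relevant feature of Belegradek's construction is that, given a closed aspherical manifold $M$, it produces a closed aspherical manifold $N$ together with a $\pi_1$-injective embedding $M\hookrightarrow N$ such that $\pi_1(N)$ is non-elementary relatively hyperbolic with $\pi_1(M)$ as a peripheral subgroup. Apply this to the aspherical manifold $M$ and the 2-form $\omega$ furnished by Corollary~\ref{cor:false-conjecture}, and extend $\omega$ to a 2-form $\omega'$ on $N$ with bounded primitive on $\tilde N$ as follows. Choose a bounded primitive $\eta$ of $\tilde\omega$ on $\tilde M$; the preimage of $M$ in $\tilde N$ is a disjoint union of translates of (a lift of) $\tilde M$, on each of which we equivariantly propagate $\eta$ via the $\pi_1(N)$-action. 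Multiplying by a $\pi_1(N)$-equivariant smooth cutoff $\chi$ on $\tilde N$ that equals $1$ on this preimage and is supported in a neighborhood of it, we obtain a bounded, $\pi_1(N)$-equivariant 1-form $\eta'$ on $\tilde N$. Its differential descends to a 2-form $\omega'$ on $N$ whose pullback to $\tilde N$ admits $\eta'$ as a bounded primitive and whose restriction to $M$ coincides with $\omega$.

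The class $[\omega']\in H^2(N;\bR)$ is not bounded: otherwise its restriction to $H^2(\pi_1(M);\bR)$ under the inclusion $\pi_1(M)\hookrightarrow\pi_1(N)$ would be bounded, but this restriction equals $[\omega]$, which by hypothesis is not bounded. Hence $N$ is an aspherical counterexample to Gromov's conjecture with $\pi_1(N)$ non-elementary relatively hyperbolic, and \cite[Corollary~20]{FS2020} yields that $G := \pi_1(N)$ carries a weakly bounded but not bounded class in $H^2(G;\bZ)$.

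The main obstacle is the equivariant extension of $\eta$ through the cutoff procedure: one needs the translates of $\tilde M$ inside $\tilde N$ to sit in a controlled, separated manner so that a single equivariant cutoff function can glue the primitives without destroying the uniform bound, and so that the restriction of the resulting class to $\pi_1(M)$ is genuinely $[\omega]$. This reduces to a local analysis near the copies of $\tilde M$ in $\tilde N$, and depends on the precise way in which $M$ is embedded in $N$ by Belegradek's construction.
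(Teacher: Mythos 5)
There is a genuine gap, and it comes from omitting the one feature of Belegradek's theorem that the paper actually needs: the manifold $M'$ produced in \cite{Belegradek2006} \emph{retracts} onto $M$. You only record a $\pi_1$-injective embedding $M\hookrightarrow N$ with $\pi_1(M)$ peripheral, and then you are forced to invent an extension of the form $\omega$ from $M$ to $N$. That extension step is where the argument breaks: if the $1$-form $\eta'$ you build on $\widetilde{N}$ were genuinely $\pi_1(N)$-equivariant, it would descend to a $1$-form on $N$, so $\omega'=d\eta'$ would be exact and $[\omega']=0$; a zero class is bounded and cannot restrict to the nonzero (indeed unbounded) class $[\omega]$ on $M$. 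The root of the problem is that the bounded primitive $\eta$ of $\widetilde{\omega}$ on $\widetilde{M}$ is necessarily \emph{not} $\pi_1(M)$-equivariant (otherwise $[\omega]=0$), so it cannot be ``equivariantly propagated'' to the translates of $\widetilde{M}$; and without equivariance nothing descends to $N$. More conceptually, restriction in cohomology goes from $N$ to $M$, so there is no natural way to push a class from the submanifold $M$ to the ambient $N$ while controlling its restriction --- this is exactly what the retraction is for. A secondary issue: your final appeal to \cite[Corollary~20]{FS2020} runs in the wrong direction; that result produces a counterexample manifold \emph{from} a weakly bounded, unbounded class in $H^2(\pi_1;\bZ)$, not an integral weakly bounded unbounded class from a counterexample $2$-form.

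The paper's argument avoids all analysis of forms. Belegradek gives a closed aspherical manifold $M'$ with a retraction $r\colon M'\to M$ (so $r\circ\iota\simeq\mathrm{id}_M$ for the inclusion $\iota$) and $\pi_1(M')$ hyperbolic relative to $\pi_1(M)$, hence non-elementary relatively hyperbolic. Taking the weakly bounded, unbounded class $[\alpha]\in H^2(\pi_1(M);\bZ)$ from Section~3, the pull-back $r^*[\alpha]\in H^2(\pi_1(M');\bZ)$ is weakly bounded (pull-backs of weakly bounded classes are weakly bounded) and not bounded, since $\iota^*r^*[\alpha]=[\alpha]$ and pull-backs of bounded classes are bounded. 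This is the same splitting trick already used with the Davis reflection trick in the proof of Corollary~\ref{cor:false-conjecture}, and it yields Corollary~\ref{cor:rel-hyp} directly with $G=\pi_1(M')$; the counterexample $2$-form on $M'$ is then obtained, if desired, from \cite[Corollary~20]{FS2020}. If you restate your proof using the retraction and pull back the class (or the form) along $r$, the difficulties you flag in your last paragraph disappear.
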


We have to mention, at this point, how Gromov himself was quite cautious about the validity of his statement, as he wrote that ``the evidence in favour of the conjecture is rather limited and it would be safe to make some extra assumptions on $\pi_1(M)$...''.

\begin{myrmk}
	Starting from the work of Gromov \cite{Gromov1991}, the notion of $\ot{d}$(bounded) forms and, thus, that of weakly bounded classes, has gained substantial importance in the context of K\"{a}hler geometry.
	Let $M$ be a closed complex manifold.
	Gromov gave in \cite{Gromov1991} the following definition: $M$ is \textbf{K\"{a}hler hyperbolic} if it admits a K\"{a}hler structure whose $2$-form $\omega \in \Omega^2(M)$ is $\ot{d}$(bounded).

	The following question arises: given a K\"{a}hler hyperbolic manifold, is its K\"{a}hler class bounded (in the sense of \ref{it:bound})?
	We are not able to provide a negative answer using the techniques of this paper.
	In fact, if $G$ is the group defined by the presentation (\ref{eq:presentation}), and $[\alpha] \in H^2(G;\bZ)$ is the weakly bounded --- but not bounded --- class, then $[\alpha] \cup [\alpha] = 0$, because there is a model of $K(G,1)$ of dimension $2$ (we manufacture one explicitly in Section \ref{sec:cat0}). On the other hand, if $M$ is a K\"{a}hler hyperbolic manifold of complex dimension $n \ge 2$ and K\"{a}hler form $\omega$, then $[\omega]^n \neq 0 \in H^{2n}(M;\bR)$, because $\frac{\omega^n}{n!}$ is the volume form of $M$. Therefore, $[\omega]$ cannot coincide with $[\alpha]$.

	One might try instead to pull-back the class $[\alpha]$ via a retraction, as we now explain.
	Let $G'$ be a group having $G$ as a subgroup and admitting a retraction $r:G' \to G$: we observe in Lemma \ref{lemma:retract} that $r^*[\alpha] \in H^2(G';\bZ)$ is weakly bounded and not bounded. However, since $[\alpha]\cup[\alpha]=0$ we also have that $r^*[\alpha] \cup r^*[\alpha] = 0$. Therefore, $[\omega]$ cannot be the pull-back of $[\alpha]$ via a retraction $r:\pi_1(M) \to G$, and this attempt doesn't work either.
	
\end{myrmk}

\paragraph{Quasi-isometrically trivial central extensions}
Let $G$ be a finitely generated group, and let
\[\begin{tikzcd}
	1 \ar[r] & \bZ \ar[r] & E \ar[r] & G \ar[r] & 1
\end{tikzcd}\]
be a central extension of $G$ by $\bZ$.
Associated to such an extension there is an element of $H^2(G;\bZ)$, called the Euler class of the extension.
The Euler class vanishes if and only if the extension is trivial, i.e., there is a commutative diagram of the form
\[\begin{tikzcd}
	1 \ar[r] & \bZ \ar[r]\ar[d,"\id"] & E \ar[r]\ar[d,"h"] & G \ar[r]\ar[d,"\id"] & 1\\
	1 \ar[r] & \bZ \ar[r,"i_1"] & \bZ \times G \ar[r,"\pi_2"] & G \ar[r] & 1
\end{tikzcd}\]
where $h$ is a group isomorphism.
The extension is said to be quasi-isometrically trivial if there is a diagram as above, with the following differences: $h$ is only required to be a quasi-isometry and the squares have to commute only up to a bounded error.
Gersten proved in \cite[\S 3]{Gersten1992} that if the Euler class of the extension is bounded, then the extension is quasi-isometrically trivial.
Then, Neumann and Reeves observed in \cite{NR1996} that the extension is quasi-isometrically trivial if and only if the Euler class is weakly bounded.
A detailed proof of this fact is provided by Frigerio and Sisto \cite{FS2020}.
Thus, we get the following corollary of Theorem \ref{mainthm}:
\begin{mycor}
	There exists a group $G$ of type F such that there is a quasi-isometrically trivial central extension of $G$ by $\bZ$ whose Euler class is not bounded.
\end{mycor}
This corollary follows from the fact that every class in $H^2(G;\mathbb{Z})$ --- in particular, the one provided by Theorem \ref{mainthm} --- can be realized as the Euler class of some central extension of $G$ by $\mathbb{Z}$, and from the observation of Neumann and Reeves mentioned above.
In \cite[Theorem 2]{FS2020} the same phenomenon is exhibited by the finitely generated, but not finitely presented, group constructed by Frigerio and Sisto.

\paragraph{Refinements of bounded cohomology}
In the recent preprint \cite{GK2022}, Gal and Kedra introduce a refinement of bounded cohomology that, in particular, provides an ``interpolation'' between the notions of bounded and weakly bounded classes.

Let $G$ be a group.
For every $p \in \bN$, they define the $p$-bounded cohomology of $G$, denoted by $H_{(p)}^*(G;\bR)$; for every degree $n$ there are natural ``comparison maps'' induced by the inclusion, for any $p$, of ``$p$-bounded cochains'' into ``$(p-1)$-bounded cochains'':
\[ H_{(n)}^n(G;\bR) \to \dots \to H_{(1)}^n(G;\bR) \to H_{(0)}^n(G;\bR).\]
The rightmost (resp.\ leftmost) term is isomorphic to the ordinary (resp.\ bounded) cohomology of $G$ in degree $n$.
The image of $H_{(1)}^n(G;\bR) \to H_{(0)}^n(G;\bR)$ consists precisely of the weakly bounded classes, while the image of $H_{(n)}^n(G;\bR) \to H_{(0)}^n(G;\bR)$ is given by the bounded classes.
Consider the case $n = 2$: then property \ref{it:wbnb} of Theorem \ref{mainthm} implies that these two images can be different.

\paragraph{Structure of the paper}
In Section \ref{sec:cohomology} we recall the definitions of bounded and $\ell^\infty$-cohomology of groups and spaces, and restate property \ref{it:wbnb} of Theorem \ref{mainthm} in terms of some natural maps between them.
In Section \ref{sec:example}, which is the heart of the paper, we show that the group $G$ defined by the presentation (\ref{eq:presentation}) satisfies property \ref{it:wbnb} of Theorem \ref{mainthm}.
Finally, in Section \ref{sec:cat0} we prove that $G$ is CAT(0) and of type F, and obtain Corollaries \ref{cor:false-conjecture} and \ref{cor:rel-hyp}.

\paragraph{Acknowledgements}
We thank Roberto Frigerio and Alessandro Sisto for helpful conversations about this problem and for the idea of considering amalgamated free products.
We are also grateful to Roberto Frigerio (again) for his useful comments on preliminary drafts of the paper, to Jonathan Bowden for pointing out some material about K\"{a}hler hyperbolicity, and to the anonymous referees for many comments that helped us to improve the exposition.

\section{Bounded cohomology and \texorpdfstring{$\bm{\ell^\infty}$}{L-infinity}-cohomology}\label{sec:cohomology}

The purpose of this section is to recall the definitions of bounded cohomology and $\ell^\infty$-cohomology, and to use them to restate property \ref{it:wbnb} of Theorem \ref{mainthm} in a way that will be useful in subsequent sections.

In the definitions below we allow coefficients $A = \bZ$ or $A = \bR$ (with trivial action of the group).
After the definitions for groups, we also consider bounded and $\ell^\infty$-cohomology of spaces, since in Section \ref{sec:example} we prefer to mostly work with spaces (more precisely, CW complexes), instead of groups.

\paragraph{Bounded cohomology of groups}
The study of bounded cohomology is a very active research area which gained popularity after the fundamental paper of Gromov \cite{Gromov1982}.

The bounded cohomology of a group $G$, denoted by $H_b^*(G;A)$, is defined as the cohomology of the subcomplex $C^*_b(G;A) \subseteq C^*(G;A)$ given by bounded cochains.
Here, cochains are functions (not necessarily homomorphisms) from $G^n$ to $A$, for some $n\in\bN$; bounded cochains are functions whose image is a bounded subset of $A$.
Notice that the coboundary operator sends bounded cochains to bounded cochains.
The inclusion at the level of cochains induces the so-called \emph{comparison map} $c^*:H_b^*(G;A) \to H^*(G;A)$ in cohomology.

\paragraph{\bm{$\ell^\infty$}-cohomology of groups}
Gersten introduced $\ell^\infty$-cohomology in \cite{Gersten1992}, using it as a tool to obtain lower bounds for the Dehn function of finitely presented groups.

The $\ell^\infty$-cohomology of a group $G$, denoted by $H_\Linfs^*(G;A)$, is defined as the cohomology of $G$ with coefficients in $\ell^\infty(G,A)$, the $A[G]$-module of bounded functions from $G$ to $A$.
The group $G$ acts on $\ell^\infty(G,A)$ as follows: $(g\cdot f)(h) = f(g^{-1}h)$ for every $f \in \ell^\infty(G,A)$ and every $g,h \in G$.
Recall that, for every $k \in \bN$, $k$-cochains are functions $\alpha:G^k \to \ell^\infty(G,A)$, and that the coboundary operator is given by the following formula, which is slightly more complicated than the one appearing in the introduction, because here $G$ has a non-trivial action on the coefficients:
\[(\delta\alpha)(g_1, \dots, g_{k+1})\ =\ g_1 \cdot (\alpha(g_2, \dots, g_{k+1})) + \sum_{i=1}^k (-1)^i \alpha(\dots, g_ig_{i+1}, \dots) + (-1)^{k+1}\alpha(g_1, \dots, g_k).\]
The inclusion of $A$ into $\ell^\infty(G,A)$ as the subgroup of constant functions induces the change of coefficients map $\iota^*:H^*(G;A) \to H_\Linfs^*(G;A)$ in cohomology.

\paragraph{Definitions for CW complexes}
Let $X$ be a connected CW complex (this assumption is more restrictive than what is needed in the general theory, but it suffices for our purposes).
Denote by $G$ the fundamental group $\pi_1(X,*)$.
\begin{itemize}
    \item Let $C^*(X;A)$ be the complex of singular cochains of $X$; we think of a singular cochain as a function, with values in $A$, defined on the set of singular simplices of $X$ of a certain dimension.
    A cochain is bounded if the set of values it assigns to singular simplices is a bounded subset of $A$.
    This condition defines a subcomplex $C_b^*(X;A) \subseteq C^*(X;A)$; the bounded cohomology of $X$, denoted by $H_b^*(X;A)$, is the cohomology of this subcomplex.
    As in the group-theoretical setting, the inclusion $C_b^*(X;A) \subseteq C^*(X;A)$ induces the comparison map $c^*:H_b^*(X;A) \to H^*(X;A)$.
    \item The $\ell^\infty$-cohomology of $X$, denoted by $H_\Linfs^*(X;A)$, is defined as the equivariant cohomology of the universal cover of $X$ with coefficients in the $A[G]$-module $\ell^\infty(G,A)$.
    We point the reader to, e.g., \cite[Section VI.3]{Whitehead1978} for the definition of equivariant cohomology, and to \cite{Mil2021} for an alternative description of $\ell^\infty$-cohomology as the cohomology of cochains on the universal cover which are ``bounded on orbits''.
    Singular and cellular (equivariant) cohomologies can be used interchangeably, since the two are canonically isomorphic.
    Like before, the inclusion of $A$ into $\ell^\infty(G,A)$ induces the map $\iota^*:H^*(X;A) \to H_\Linfs^*(X;A)$ in cohomology.
\end{itemize}
\begin{myrmk}
    The bounded cohomology of a CW complex \emph{cannot} be computed using cellular cochains.
    This can be illustrated with the following example: the bounded cohomology of the wedge of two circles is nontrivial in degree $2$, despite the absence of $2$-dimensional cells. 
\end{myrmk}

Let $X$ be a model for $K(G,1)$, i.e., a connected CW complex with an isomorphism $\pi_1(X,*) \cong G$ and whose universal cover is contractible.
The general theory of (equivariant) cohomology then implies that there are canonical isomorphisms $H_\Linfs^*(G;A) \cong H_\Linfs^*(X;A)$ and $H^*(G;A) \cong H^*(X;A)$.
There is also a canonical isomorphism $H_b^*(G;A) \cong H_b^*(X;A)$ (if $A=\bR$ even more is true: the bounded cohomology groups are naturally endowed with a seminorm, and the isomorphism preserves it; moreover, there is an isomorphism even without the assumption that the universal cover is contractible. This is the \emph{mapping theorem}, see, e.g., \cite{Ivanov2017}).

These isomorphisms are part of a commutative diagram
\[
\begin{tikzcd}
    H_b^*(G;A) \ar[r,"c^*"]\ar[d] & H^*(G;A) \ar[r,"\iota^*"]\ar[d] & H_\Linfs^*(G;A) \ar[d] \\
    H_b^*(X;A) \ar[r,"c^*"] & H^*(X;A) \ar[r,"\iota^*"] & H_\Linfs^*(X;A)
\end{tikzcd}
\]
where the vertical maps are the canonical isomorphisms.
The square on the right commutes because of the naturality of the change of coefficients $A \subseteq \ell^\infty(G,A)$, which induces the horizontal maps.
For the commutativity of the square on the left see, e.g., \cite{Ivanov2017}.

\paragraph{Reformulation of property \ref{it:wbnb} of Theorem \ref{mainthm}}
Let $G$ be a group.
The existence of a weakly bounded and not bounded class in $H^2(G;\bZ)$ is equivalent to the following:
the sequence of maps
\begin{equation}\label{sequence}
    \begin{tikzcd}
        H_b^2(G;\bZ) \ar[r,"c^2"] & H^2(G;\bZ) \ar[r,"\iota^2"] & H_\Linfs^2(G;\bZ)
    \end{tikzcd}
\end{equation}
is not exact at $H^2(G;\bZ)$.
The equivalence descends immediately from the following facts:
\begin{itemize}
    \item A class in $H^k(G;\bZ)$ is bounded if and only if it lies in the image of $c^k$;
    \item A class in $H^k(G;\bZ)$ is weakly bounded if and only if it lies in the kernel of $\iota^k$.
\end{itemize}
The first of the two facts follows directly from the definitions, while the second one is proved by Frigerio and Sisto \cite[Proposition 11]{FS2020} for coefficients in any finitely generated Abelian group, or the real numbers.
The two facts listed above also hold for real coefficients.

The following result, taken from \cite{FS2020}, will allow us to work with real coefficient even though our goal is to prove that a certain class in $H^2(G;\bZ)$ is weakly bounded and not bounded.
\begin{mylemma}\label{lemma:int_real}
	Let $x \in H^k(G;\bZ)$ and denote by $x_\bR$ the image of $x$ in $H^k(G;\bR)$ under the change of coefficient map.
	Then:
	\begin{itemize}
		\item $x$ is weakly bounded if and only if $x_\bR$ is weakly bounded;
		\item $x$ is bounded if and only if $x_\bR$ is bounded.
	\end{itemize}
\end{mylemma}
\begin{proof}[Sketch of the proof]
	The ``only if'' directions follow directly from the definitions: if $x$ has a (weakly) bounded integer representative, then the same representative witnesses the fact that also $x_\bR$ is (weakly) bounded.
	
	If $x = [\alpha]$ and $x_\bR = [\beta]$ with $\beta$ a (weakly) bounded cochain, then $\beta = \alpha + \delta\eta$ for a certain $\eta \in C^k(G;\bR)$.
	Now, replace $\eta$ with a $\eta' \in C^k(G;\bZ)$ such that $\eta-\eta'$ is bounded.
	The resulting $\beta' = \alpha + \delta\eta'$ represents $x$ and is (weakly) bounded.
\end{proof}

\section{A weakly bounded but not bounded class}\label{sec:example}

In this section we construct a group $G$ together with a cohomology class $[\alpha]\in H^2(G;\bZ)$ which is weakly bounded but not bounded.

The group $G$ is constructed as an amalgamated free product $H*_{\gen{b}}H$ for a certain group $H$ and a certain element $b\in H$, as we will explain in Section \ref{sec:construction}. In Section \ref{sec:class} we provide an explicit model for $K(G,1)$, which we call $X$, and we use it to define the cohomology class $[\alpha]$, which is strictly related to the decomposition of $G$ as amalgamated free product. In Section \ref{sec:unbounded} we prove that $[\alpha_\bR]$ is not bounded: this is done by embedding a genus-$2$ closed surface $S_2$ in $X$ in several different ways, and by taking the cap product between the class $[\alpha_\bR]$ and (the image of) the fundamental class $[S_2]\in H_2(S_2;\bR)$. Finally, Sections \ref{sec:area} and \ref{sec:wb} are dedicated to the proof that $[\alpha_\bR]$ is weakly bounded. We rely on a result taken from \cite{Mil2021}, that characterizes weakly bounded cohomology classes in $H^2(G;\bR)$ in terms of a linear isoperimetric inequality in the universal cover $\ot X$ of $X$: we study the structure of $\ot X$ and we prove that such isoperimetric inequality holds for the class $[\alpha_\bR]$.

\subsection{Construction of the group}\label{sec:construction}

Consider the group $H=\pres{a,b,t}{tbt^{-1}=b,tat^{-1}=ba}$. This is a free-by-cyclic group, and in particular it's an HNN extension of the free group $F_2=\gen{a,b}$ with stable letter $t$. An equivalent way of writing the same presentation is $H=\pres{a,b,t}{[t,b]=1,[t,a]=b}$.

\begin{mylemma}\label{scl0}
For every $n\in\bN$, the element $b^n\in H$ is a commutator.

\end{mylemma}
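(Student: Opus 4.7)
The plan is to exhibit an explicit commutator expression for $b^n$ using the two defining relations of $H$. Since $[t,a] = b$ by the presentation, it is natural to guess that a power of $t$ will produce a power of $b$, so the candidate is $[t^n, a]$.

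First I would prove by induction on $n$ that $t^n a t^{-n} = b^n a$. The base case $n=1$ is just the relation $tat^{-1} = ba$. For the inductive step, I would apply conjugation by $t$ to $t^n a t^{-n} = b^n a$, distribute the conjugation across the product $b^n \cdot a$, and use that $t$ commutes with $b$ (so $t b^n t^{-1} = b^n$) together with $tat^{-1} = ba$ to obtain $t^{n+1} a t^{-(n+1)} = b^n \cdot ba = b^{n+1} a$.

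Once this identity is established, the conclusion is immediate: multiplying on the right by $a^{-1}$ gives
\[
[t^n, a] = t^n a t^{-n} a^{-1} = b^n,
\]
which exhibits $b^n$ as a single commutator.

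I do not expect any real obstacle here; the computation only uses the two relations $[t,b]=1$ and $[t,a]=b$ in a direct inductive way. The slightly delicate point, if any, is keeping track of the order of factors when distributing the conjugation by $t$, but this is handled cleanly by the centrality of $b$ with respect to $t$.
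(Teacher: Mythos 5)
Your proof is correct and follows essentially the same route as the paper: both establish by induction that $[t^n,a]=b^n$, using only the relations $[t,b]=1$ and $tat^{-1}=ba$. The paper carries out the induction directly on the commutator $[t^n,a]$, while you induct on the equivalent identity $t^n a t^{-n} = b^n a$; this is a cosmetic difference only.
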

\begin{proof}
We show by induction on $n$ that $b^n=[t^n,a]$. The base step is trivial. For the inductive step, assume that $b^n=[t^n,a]$ and observe that
$$[t^{n+1},a]\ =\ t[t^n,a]at^{-1}a^{-1}\ =\ tb^nat^{-1}a^{-1}\ =\ b^ntat^{-1}a^{-1}\ =\ b^{n+1}.$$
The conclusion follows.
\end{proof}

We consider the Cayley graph of $H$ with respect to the generating set $\{a,b,t\}$. This graph has a vertex for each element of $H$, and two vertices $h,h'\in H$ are connected by an edge if and only if $h'=hs$ for some $s\in\{a,b,t,a^{-1},b^{-1},t^{-1}\}$. We put on the Cayley graph the path metric that assigns length $1$ to each edge, and we consider on $H$ the metric as a subspace of its Cayley graph.

Since $H$ is free-by-cyclic, every element of $H$ can be uniquely written as $t^pv(a,b)$ where $p\in\bZ$ and $v(a,b)$ is an element of the free group $F_2=\gen{a,b}$. In the free group $F_2=\gen{a,b}$ we can uniquely write $v(a,b)=b^qw(a,b)$ where $q\in\bZ$ and $w(a,b)$ is a reduced word in the letters $a,b$ (and their inverses) such that $w(a,b)$ doesn't begin with $b$ or $b^{-1}$. It follows that every element of $H$ can be uniquely written as $t^pb^qw(a,b)$ where $p,q\in\bZ$ and $w(a,b)$ is a reduced word in the letters $a,b$ (and in their inverses) that doesn't begin with $b$ or $b^{-1}$.

We consider the map $\pi:H\rar\bZ$ given by $\pi(t^pb^qw(a,b))=q$, and call this map \textbf{\bm{$\mappapi$}}.
The $\mappapi$ map $\pi$ has the following two properties, which will be of fundamental importance in what follows: it is $1$-Lipschitz, and its restrictions to $\gen{b}$-cosets are either constant maps or translations.
We prove these properties in two separate lemmas.

\begin{mylemma}\label{pi-lipschitz}
The map $\pi:H\rar\bZ$ is $1$-Lipschitz.
\end{mylemma}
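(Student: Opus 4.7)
The plan is to reduce the problem to checking the Lipschitz inequality on edges of the Cayley graph: for every $h \in H$ and every generator $s \in \{a^{\pm 1}, b^{\pm 1}, t^{\pm 1}\}$, it suffices to show that $|\pi(h) - \pi(hs)| \le 1$. Writing $h = t^p b^q u(a,b)$ in normal form, this becomes a question of how the exponent $q$ evolves under right multiplication by $s$.

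The cases $s \in \{a^{\pm 1}, b^{\pm 1}\}$ are short: multiplying $b^q u(a,b)$ on the right by a single generator of $F_2$ affects only the last letter of the reduced word, so the initial $b^q$ block survives unchanged. The only exception is the degenerate situation $u = 1$ with $s = b^{\pm 1}$, in which case $q$ shifts by exactly $\pm 1$.

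The core of the proof is $s = t^{\pm 1}$. Using the defining relations $tb = bt$ and $ta = bat$, one verifies that $t \cdot w(a,b) = w(ba, b) \cdot t$ for every word $w$, and likewise $t^{-1} \cdot w(a,b) = w(b^{-1}a, b) \cdot t^{-1}$. Therefore
\[ h \cdot t = t^{p+1} b^q u(b^{-1}a, b), \qquad h \cdot t^{-1} = t^{p-1} b^q u(ba, b), \]
and the task reduces to computing the leading $b$-exponent of the free reduction of $b^q u(b^{\epsilon} a, b) \in F_2$, for $\epsilon = \pm 1$. A short analysis on the first letter of $u$ then suffices: if $u = 1$, the substituted word is empty and $\pi$ is unchanged; if $u$ starts with $a$, the substituted word begins with $b^{\epsilon} a$, so $q$ shifts by $\epsilon$; if $u$ starts with $a^{-1}$, the substituted word begins with $a^{-1} b^{-\epsilon}$, whose leading $a^{-1}$ shields the initial $b^q$ and leaves $\pi$ unchanged.

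The main potential obstacle is that the substitution $a \mapsto b^{\epsilon} a$ could in principle produce cancellations propagating back toward the beginning of the word, invalidating the analysis above. I expect the resolution to be that every such cancellation is confined locally, between the end of one substituted block and the start of the next, so that the leading symbol of the substituted word (either $b^{\epsilon}$ or $a^{-1}$, according to the first letter of $u$) is never touched.
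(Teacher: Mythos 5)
Your proof is correct and takes essentially the same approach as the paper: reduce to right multiplication by a single generator, use the normal form $t^pb^qw(a,b)$, and for $t^{\pm1}$ perform the substitution $a\mapsto b^{\mp1}a$, observing that free reduction never cancels an $a^{\pm1}$ letter, so only the leading $b$-exponent can change, and by at most one. The cancellation-locality claim you flag as an expectation is precisely the observation the paper itself states without further argument, and your block-junction justification of it is sound; the only cosmetic difference is that the paper treats only $s\in\{a,b,t\}$ and disposes of inverses by a symmetry remark, while you handle $t$ and $t^{-1}$ directly.
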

\begin{proof}
Consider an arbitrary element $h=t^pb^qw(a,b)$ where $w(a,b)$ does not begin with $b^{\pm1}$.
We proceed to show that $\abs{\pi(h)-\pi(hs)} \le 1$ for every $s \in \{a,b,t\}$.
There is no need to consider multiplication by $s^{-1}$ separately, since $\abs{\pi(h)-\pi(hs^{-1})} = \abs{\pi(h^\prime)-\pi(h^\prime s)}$ where $h^\prime = hs^{-1}$.

Write the word $t^pb^qw(a,b)a$ and reduce it: we have that at most one cancellation occurs, and, if it is the case, it involves the last two $a$ and $a^{-1}$ letters; in particular the exponent $q$ remains unchanged and thus $\pi(ha)=\pi(h)$.

Similarly, write the word $t^pb^qw(a,b)b$ and reduce it: again, at most one cancellation occurs, and, if it is the case, it involves the last two $b$ and $b^{-1}$ letters; if $w(a,b)$ is non-empty, the exponent $q$ remains unchanged; if $w(a,b)$ is empty, the exponent $q$ changes by exactly one. In any case we have $\abs{\pi(hb)-\pi(h)}\le1$.

Finally, consider the element $t^pb^qw(a,b)t$ of $H$ and notice that it is equal to $t^{p+1}b^qw(b^{-1}a,b)$.
The word $w(b^{-1}a,b)$ is obtained from $w(a,b)$ by substituting each occurrence of $a$ with $b^{-1}a$, and each occurrence of $a^{-1}$ with $a^{-1}b$.
After performing this substitutions letter by letter, we obtain a possibly unreduced word which we can then reduce to $w'(a,b)$.
We observe that, during the reduction process, no $a$ or $a^{-1}$ letter gets cancelled.
If $w(a,b)$ begins with $a$, then $w'(a,b)$ begins with $b^{-1}a$ and thus $\pi(ht)=q-1$; if $w(a,b)$ begins with $a^{-1}$, then $w'(a,b)$ begins with $a^{-1}$ too and thus $\pi(ht)=q$; if $w(a,b)$ is empty, then $\pi(ht) = q$.
In any case we have $\abs{\pi(ht)-\pi(h)}\le1$.
\end{proof}

\begin{mylemma}\label{pi-cosets}
For every element $u\in H$, the restriction $\pi:u\gen{b}\rar\bZ$ to the coset $u\gen{b}$ is of one of the following forms:
\begin{enumerate}[label=(\roman*)]
\item\label{parallel} It is a translation, i.e., $\pi(ub^n) = \pi(u) + n$ for all $n\in\bZ$;
\item\label{non-parallel} It is constant on $u\gen{b}$, i.e., $\pi(ub^n) = \pi(u)$ for all $n\in\bZ$.
\end{enumerate}
Moreover, the coset $\gen{b}$ falls into case \ref{parallel}.
\end{mylemma}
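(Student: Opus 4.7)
The plan is to unpack the statement via the unique normal form $h = t^p b^q w(a,b)$ used to define $\pi$, applied to elements of the coset $u\gen{b}$. Given any representative $u$ with normal form $u = t^p b^q w(a,b)$, I would compute the normal form of $ub^n = t^p b^q w(a,b) b^n$ for all $n \in \bZ$ and simply read off $\pi(ub^n)$ from the $b$-exponent in the result.

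Two cases then arise, driven by whether $w(a,b)$ is empty. In the empty case, $ub^n = t^p b^{q+n}$ is already in normal form, so $\pi(ub^n) = q + n$, placing the coset in case \ref{parallel}. If instead $w(a,b)$ is non-empty, it begins with $a^{\pm 1}$ (since by assumption it does not begin with $b^{\pm 1}$). Appending $b^n$ on the right and freely reducing only affects the tail of $w(a,b)$ and can never reach the leading $a^{\pm 1}$ letter; hence the reduced word $w'(a,b)$ is still non-empty and begins with $a^{\pm 1}$, making $t^p b^q w'(a,b)$ the normal form of $ub^n$. Thus $\pi(ub^n) = q$ is constant, placing the coset in case \ref{non-parallel}.

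For the final clause, the coset $\gen{b}$ corresponds to the representative $u = 1$, whose normal form has empty $w$-part; hence the first case applies and $\pi(b^n) = n$ is a translation. The only subtle point in the whole argument is verifying that $w'(a,b)$ is genuinely non-empty in the second case, which is why the convention forbidding $w(a,b)$ to begin with $b^{\pm 1}$ is essential: it forces the leading letter of $w(a,b)$ to be $a^{\pm 1}$, protecting it from any rightward $b$-cancellation. Once this is confirmed, the dichotomy is automatically a property of the coset rather than of the chosen representative.
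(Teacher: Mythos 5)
Your proof is correct and follows essentially the same route as the paper: write $u$ in the normal form $t^pb^qw(a,b)$ and split according to whether $w$ is empty (translation case) or non-empty (constant case, since right-multiplication by $b^n$ cannot cancel past the leading $a^{\pm1}$ letter), with $u=1$ giving the final clause. Your extra justification that the free reduction never reaches the leading letter is exactly the point the paper leaves implicit.
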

\begin{proof}
Let $u=t^pb^qw(a,b)$ so that $w(a,b)$ is reduced and does not begin with $b$ or $b^{-1}$. By definition $\pi(u)=q$.
If $w(a,b)$ is non-empty, then $\pi(ub^n)=q$ regardless of $n$, and thus we fall into case \ref{non-parallel}.
If $w(a,b)$ is empty, then $\pi(ub^n)=n+q$ and thus we fall into case \ref{parallel}.
Notice that with $u=1$ we fall into case \ref{parallel}.
\end{proof}

The $\mappapi$ map $\pi$ can be thought as a projection from the group $H$ to $\gen{b}\cong\bZ$ (hence the name).
Similarly, we can project $H$ on a generic coset $u\gen{b}$: for $u\in H$, define the map $\pi_u:H\rar\bZ$ given by $\pi_u(h)=\pi(u^{-1}h)$.
From Lemmas \ref{pi-lipschitz} and \ref{pi-cosets} it follows that $\pi_u:H\rar\bZ$ is $1$-Lipschitz and that, for each $u'\in H$, the restriction $\pi_u:u'\gen{b}\rar\bZ$ of $\pi_u$ to the coset $u'\gen{b}$ falls in one of cases \ref{parallel} or \ref{non-parallel} of Lemma \ref{pi-cosets}.

\begin{mylemma}\label{pi-cosets-equivalence}
For two elements $u,u'\in H$, the following are equivalent:
\begin{itemize}
\item The restriction $\pi_u:u'\gen{b}\rar\bZ$ falls into case \ref{parallel} of Lemma \ref{pi-cosets};
\item The restriction $\pi_{u'}:u\gen{b}\rar\bZ$ falls into case \ref{parallel} of Lemma \ref{pi-cosets}.
\end{itemize}
\end{mylemma}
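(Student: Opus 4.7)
The plan is to reduce the two conditions to a simple algebraic statement about the element $u^{-1}u'$, which will then be manifestly symmetric in $u$ and $u'$.

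First I would unravel the definitions. By construction, $\pi_u(u'b^n) = \pi(u^{-1}u' \cdot b^n)$, so the restriction of $\pi_u$ to $u'\gen{b}$ is exactly the restriction of $\pi$ to the coset $(u^{-1}u')\gen{b}$, followed by reindexing with $n$. Applying Lemma \ref{pi-cosets} to this coset, condition (i) holds if and only if the normal form $u^{-1}u' = t^p b^q w(a,b)$ (with $w$ reduced and not starting with $b^{\pm 1}$) has $w$ empty, i.e.\ if and only if $u^{-1}u' \in \gen{t,b}$ (more precisely, $u^{-1}u' = t^p b^q$ for some $p,q \in \bZ$). By the same reasoning, condition (ii) holds if and only if $u'^{-1}u = t^{p'} b^{q'}$ for some $p',q' \in \bZ$.

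Second, I would show the equivalence of these two conditions directly. The key point is that the relation $[t,b] = 1$ in $H$ makes the subset $\{t^p b^q : p,q \in \bZ\}$ an abelian subgroup, hence in particular closed under taking inverses: if $u^{-1}u' = t^p b^q$, then
\[u'^{-1}u = (t^p b^q)^{-1} = b^{-q} t^{-p} = t^{-p} b^{-q},\]
which is again of the required form. The converse is symmetric. Combining with the first step, (i) and (ii) are equivalent.

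I do not expect any serious obstacle: the entire argument is a bookkeeping exercise once one notices that the "translation case" of Lemma \ref{pi-cosets} is characterized by membership in the abelian subgroup $\gen{t,b}$, and this subgroup is invariant under inversion. The only thing one has to be a bit careful about is invoking Lemma \ref{pi-cosets} with $u^{-1}u'$ playing the role of the base point $u$ of a coset, but this is legitimate since that lemma applies to arbitrary cosets of $\gen{b}$.
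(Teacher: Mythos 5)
Your proof is correct and is essentially the paper's argument spelled out in detail: the paper's one-line proof says exactly that each condition is equivalent to $u' = ut^pb^q$ for some $p,q\in\bZ$, which is the characterization you derive from Lemma \ref{pi-cosets}, and the symmetry under swapping $u$ and $u'$ follows from $[t,b]=1$ as you note. No gaps.
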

\begin{proof}
    Each of the conditions holds if and only if $u'=ut^pb^q$ for some $p,q\in\bZ$, since $b$ and $t$ commute.
\end{proof}
\begin{mydef}
We say that two cosets $u\gen{b}$ and $u'\gen{b}$ are \textbf{parallel} if they satisfy any (and thus both) of the conditions of Lemma \ref{pi-cosets-equivalence}.
\end{mydef}

We are now ready to provide an example of a group together with a cohomology class which is weakly bounded but not bounded.
The group we consider is $G=H*_\gen{b}H$, the amalgamated free product of two copies of $H$, where we identify the two copies of the subgroup $\gen{b}$.

\begin{myrmk}\label{rmk:hypothesis}
The rest of Section \ref{sec:example} is dedicated to the construction of a cohomology class in $H^2(H*_{\gen{b}}H;\bZ)$ and to the proof that this class is weakly bounded but not bounded.
However, we point out that the exact same construction can be performed (and provides a weakly bounded but not bounded cohomology class) for any group $H$ that satisfies the following properties:
\begin{itemize}
\item We require that we are given a group $H$ and an element $b\in H$ of infinite order;
\item For the element $b$, we look at its \textbf{stable commutator length} in $H$: for $n\in\bN$, let $r_n\ge0$ be the minimum non-negative integer such that $b^n$ can be written as a product of $r_n$ commutators in $H$, and define $$\scl{b}=\liminf_{n\rar+\infty}\frac{r_n}{n}.$$
We require that $\scl{b}=0$;
\item We require that we are given a \textbf{\bm{$\mappapi$}} map $\pi:H\rar\bZ$ such that:
\begin{itemize}
	\item $\pi$ is Lipschitz with respect to the word metric induced by some generating set of $H$;
	\item For every element $u \in H$, the restriction $\pi:u\gen{b}\to\bZ$ is either constant or a translation (see Lemma \ref{pi-cosets}), possibly up to bounded error.
	More precisely, there is a constant $C$ (independent of $u$) such that, for every $u \in H$, the restriction $\pi:u\gen{b}\to\bZ$ either has image of diameter $\le C$, or satisfies $\abs{(\pi(ub^n)-\pi(ub^m)) - (n - m)} \le C$.
	For $u = 1$ the restriction must be of the translation type.
\end{itemize}
\end{itemize}
For the group $H$ constructed in this section, the property $\scl{b}=0$ follows from Lemma \ref{scl0}, the $b$-projection map has Lipschitz constant $1$ (Lemma \ref{pi-lipschitz}), and its restrictions are honest constants or translations, with error $C = 0$ (Lemma \ref{pi-cosets}).

Our group $H$ and $b$-projection $\pi$ also satisfy the conclusion of Lemma \ref{pi-cosets-equivalence}, which allows to define the relation of parallelism among $\gen{b}$-cosets.
This property isn't strictly necessary for the construction, but makes some of the arguments in Subsection \ref{sec:wb} easier, and we won't hesitate in using it.
In general one would still have a relation of parallelism, but it may fail to be symmetric.
\end{myrmk}

\subsection{Construction of the cohomology class}\label{sec:class}

In this subsection, we produce a specific model for $K(G,1)$, i.e., an aspherical space with fundamental group $G$, by gluing two copies of a model for $K(H,1)$ with a cylinder, and we use it to define a certain cohomology class in $H^2(G;\bZ)$.
As we will see later, this cohomology class will turn out to be weakly bounded but not bounded.

In order to build a model for $K(H,1)$, we perform the following standard construction. Let $Y$ be a CW complex consisting of one $0$-cell, three $1$-cells, oriented and labeled with $a,b,t$, two $2$-cells glued along the paths $tbt^{-1}b^{-1}$ and $tat^{-1}a^{-1}b^{-1}$, and cells in higher dimension in such a way that all the homotopy groups $\pi_k(Y,*)$ for $k\ge2$ are trivial. From the construction, it is obvious that $Y$ is a model for $K(H,1)$.

We denote by $\ot Y$ the universal cover of $Y$. Each edge in the $1$-skeleton of $\ot Y$ inherits a label and an orientation (based on which edge of $Y$ it is mapped to by the covering map). Observe that the $1$-skeleton of $\ot Y$ is exactly the Cayley graph of $H$ with respect to the generating set $a,b,t$ (see Figure \ref{fig:ytilde}). If we fix a basepoint in the $0$-skeleton of $\ot Y$, this allows us to identify the $0$-cells of $\ot Y$ with the elements of $H$: the basepoint corresponds to the identity element of $H$, and crossing a $1$-cell corresponds to right multiplication by the label of the edge or its inverse, according to the orientation of the $1$-cell.

Consider the subspace of the $1$-skeleton of $\ot Y$ given by the union of all the (closures of the) edges labeled $b$: each connected component of such subspace is called a \textbf{$\bm{b}$-line}, and corresponds to a coset $u\gen{b}$ for some $u\in H$.
We say that two $b$-lines are \textbf{parallel} if the two corresponding cosets are parallel (this does not depend on the choice of the basepoint).

\begin{figure}[H]
    \centering
    \includegraphics[scale=0.6]{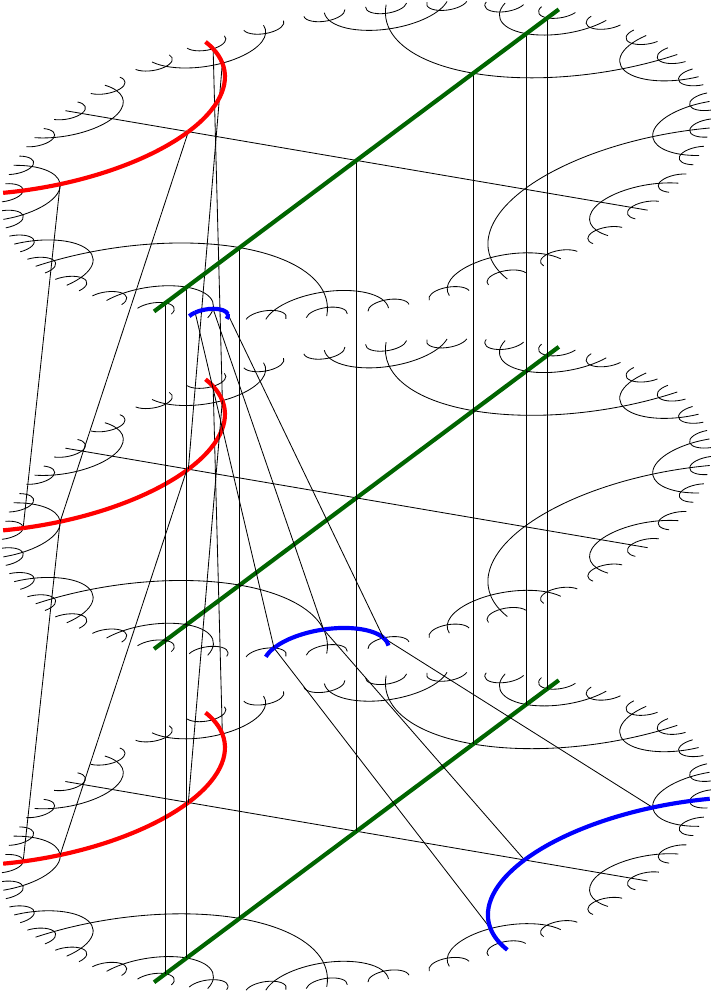}
    \caption{A portion of the $1$-skeleton of $\ot Y$, which coincides with the Cayley graph of $H$; the labeling has been omitted.
        The horizontal leaves are copies of the Cayley graph of $F_2 = \gen{a,b}$, and are joined by $1$-cells labeled with $t$ and oriented upward.
        For simplicity, only some of these $1$-cells are drawn.
        In bold we can see some $b$-lines, parallel $b$-lines having the same color.}
    \label{fig:ytilde}
\end{figure}

Consider now the group $G=H*_\gen{b}H$.
Let $S^1=\{z\in\bC \st \abs{z}=1\}$ be the unit circle, and consider on $S^1\times[0,1]$ a structure of CW complex with two $0$-cells $(1,0),(1,1)$, three $1$-cells $S^1\times\{0\},S^1\times\{1\},\{1\}\times[0,1]$ and a single $2$-cell.
Let $X = Y\cup_bY$ be the CW complex obtained by taking two copies $Y_0,Y_1$ of $Y$ and a copy of $S^1\times[0,1]$, by gluing $S^1\times\{i\}$ to $Y_i$ along the $1$-cell labeled $b$, for $i \in \{0,1\}$ (see Figure \ref{fig:X}). 
We observe that $\pi_1(X)\cong G$, and it follows from the classical result \cite[Corollary on page 160]{Whitehead1939} that $X$ is aspherical, and thus a model for $K(G,1)$, since it is obtained from aspherical spaces glued along aspherical and $\pi_1$-injective subspaces.

\begin{figure}[ht]
    \centering
    \includegraphics[scale=1]{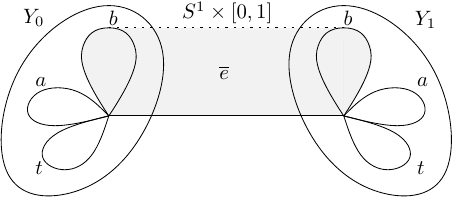}
    \caption{The space $X$ consists of two copies $Y_0,Y_1$ of $Y$, together with a cylinder $S^1\times[0,1]$. For simplicity, in the figure we only drew the $1$-skeleton of $Y_0$ and $Y_1$. The boundary component $S^1\times\{0\}$ (resp.\ $S^1\times\{1\}$) of the cylinder is glued onto the $1$-cell labeled $b$ in $Y_0$ (resp.\ $Y_1$).}
    \label{fig:X}
\end{figure}

We denote by $\ot X$ the universal cover of $X$; it consists of infinitely many disjoint copies of $\ot{Y}$ and \textbf{strips}, i.e., subspaces homeomorphic to $\bR \times [0,1]$ covering the cylinder $S^1\times[0,1]$.
Each side of each strip $\bR\times[0,1]$ is glued along some $b$-line contained in some copy of $\ot Y$, and each $b$-line has exactly one side of one strip glued onto it (see Figure \ref{fig:Xtilde}).
The copies of $\ot Y$ and the strips $\bR\times[0,1]$ are glued in a ``tree-like'' fashion, as we now explain.
Take the space $\ot X$ and collapse each copy of $\ot Y$ to a single point; also, collapse each strip $\bR\times[0,1]$ to a segment by taking the projection on the second component; we obtain a quotient space $T$ which is a graph, with one vertex corresponding to each copy of $\ot Y$, and one edge corresponding to each strip.
The graph $T$ is a tree, since $\ot X$ is simply connected, and each vertex has valence $\aleph_0$.
It is the Bass-Serre tree corresponding to the amalgamated product $H*_\gen{b} H$.
Call $\tau:\ot X\rar T$ the quotient map.

\begin{figure}[ht]
    \centering
    \includegraphics[scale=1]{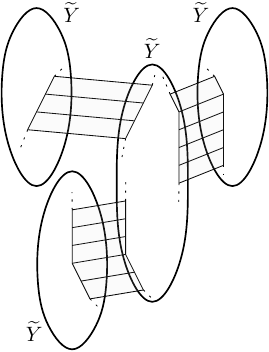}
    \caption{A portion of the space $\ot X$, with some disjoint copies of $\ot Y$ and some strips isomorphic to $\bR\times[0,1]$. Each side of each strip is glued onto a $b$-line in some copy of $\ot Y$.}
    \label{fig:Xtilde}
\end{figure}
%

Let $\ol{e}$ be the $2$-cell of $X$ coming from the unique $2$-cell of $S^1\times[0,1]$. We consider the cellular cohomology of the complex $X$; let $\alpha\in C^2_\mathrm{CW}(X;\bZ)$ be the map given by $\alpha(\ol{e})=1$ and $\alpha(e)=0$ for every other $2$-cell $e\not=\ol{e}$.
We observe that, since no $3$-cell of $X$ is attached on $\ol{e}$, we have $\delta\alpha=0$ and thus $\alpha$ defines a cohomology class $[\alpha]\in H^2_\mathrm{CW}(X;\bZ)$.
Since $X$ is a model for $K(G,1)$, we have a canonical isomorphism between $H^2_\mathrm{CW}(X;\bZ)$ and $H^2(G;\bZ)$, and thus we obtain a class in $H^2(G;\bZ)$; our goal is to show that this class is weakly bounded but not bounded.

\begin{myrmk}
	Associated to the amalgamated free product $G = H *_\gen{b} H$ there is a Mayer-Vietoris sequence, in which we find a homomorphism $H^1(\gen{b};\bZ) \to H^2(G;\bZ)$.
	The class we have produced in $H^2(G;\bZ)$ coincides with the image of the positive generator of $H^1(\gen{b};\bZ) \cong \bZ$ under this homomorphism.
\end{myrmk}



\paragraph{From integral to real coefficients.}
We define $\alpha_\bR \in C_\mathrm{CW}^2(X;\bR)$ to be the cochain corresponding to $\alpha$ under the change of coefficients map induced by the inclusion $\bZ \subseteq \bR$.
By Lemma \ref{lemma:int_real}, passing to real coefficients does not interfere with boundedness or weak boundedness of cohomology classes.
Therefore, to establish property \ref{it:wbnb} of Theorem \ref{mainthm}, it is enough to show that $[\alpha_\bR]\in H^2_\mathrm{CW}(X;\bR) \cong H^2(G;\bR)$ is weakly bounded but not bounded.
Hereafter, when we say that a class in cellular cohomology is bounded or weakly bounded, we mean that the mentioned property is enjoyed by the corresponding class in the cohomology of $G$ via the canonical isomorphism.

\subsection{The cohomology class is not bounded}\label{sec:unbounded}

We now prove that the cohomology class that we have constructed is not bounded.

\begin{myprop}\label{notbounded}
The cohomology class $[\alpha_\bR] \in H^2_\mathrm{CW}(X;\bR) \cong H^2(G;\bR)$ is not bounded.
\end{myprop}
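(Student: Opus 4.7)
The plan is to show that the pairing $\langle[\alpha_\bR],-\rangle$ on $H_2(X;\bR)$ is unbounded with respect to the Gromov $\ell^1$-seminorm. This will give the conclusion, because if a bounded singular cocycle $\beta\in C^2_b(X;\bR)$ represented $[\alpha_\bR]$, then for every singular 2-cycle $z$ we would have
\[
|\langle[\alpha_\bR],[z]\rangle|\ =\ |\beta(z)|\ \le\ \|\beta\|_\infty\cdot\|z\|_1,
\]
so the pairing would be controlled by the Gromov seminorm of the homology class.

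First I would pin down the class being tested. Consider the cellular 2-cycle $\sigma_1 = D_0 - \ol{e} - D_1$, where $D_i$ is the single relator 2-cell of $Y_i$ corresponding to $[t,a]=b$ (so $\partial D_i = \pm b_i$) and $\ol{e}$ is the unique 2-cell of the cylinder (with $\partial\ol{e} = b_0 - b_1$). Signs are arranged so that $\partial\sigma_1=0$, and by inspection $\alpha_\bR(\sigma_1)=\pm 1$; in particular $[\sigma_1]\neq 0$ in $H_2(X;\bR)$.

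Next, for each $n\ge 1$ I would construct a singular 2-cycle $\xi_n$ with $\langle[\alpha_\bR],[\xi_n]\rangle=\pm n$ and $\|\xi_n\|_1 \le C$ for a constant $C$ independent of $n$. It is assembled from three pieces matched along the $b_i$-circles. In the cylinder, the map $(s,t)\mapsto(e^{2\pi i n s},t)$ from $[0,1]^2$ to $S^1\times[0,1]$, triangulated into two 2-simplices, gives a singular 2-chain $S^n$ with $\|S^n\|_1=2$: its boundary is (up to sign) the difference between the $b^n$-loops on the two sides of the cylinder, parametrized as single 1-simplices, and it covers the cell $\ol{e}$ with cellular degree $n$. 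In each $Y_i$, the crucial ingredient is Lemma \ref{scl0}: since $b^n=[t^n,a]$ is a single commutator in $H$, the $b^n$-loop bounds a continuous image of a once-punctured torus, and a fixed finite triangulation of the punctured torus --- keeping its boundary as one edge mapped to the $b^n$-loop --- produces a singular 2-chain $T_i^n$ in $Y_i$ with $\|T_i^n\|_1\le k$ for a constant $k$ independent of $n$. Taking compatible parametrizations, a signed combination such as $\xi_n = -T_0^n + S^n + T_1^n$ is a 2-cycle with $\|\xi_n\|_1\le 2k+2$; since $T_i^n\subseteq Y_i$ contains no cylinder 2-cell in its image, one computes $\langle[\alpha_\bR],[\xi_n]\rangle=\pm n$. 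Substituting into the opening inequality yields $n\le C\|\beta\|_\infty$ for all $n$, a contradiction.

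I expect the real work to lie in the construction of the fillings $T_i^n$ with $\|T_i^n\|_1$ independent of $n$: a naive cellular filling of the word $b^n$ has area on the order of $n^2$, and even the commutator filling viewed at the cellular level would be too large. What rescues the argument is exactly Lemma \ref{scl0}, which expresses $b^n$ as \emph{one} commutator $[t^n,a]$ of fixed combinatorial complexity, so that the topological filling by a punctured torus can be given a fixed, $n$-independent triangulation. A minor technical point is ensuring that the three singular boundaries cancel on the nose rather than merely up to homology; this is a matter of parametrizing the $b^n$-loop identically on each piece, or absorbing any discrepancy into an auxiliary 2-chain whose $\ell^1$-norm is bounded by a constant.
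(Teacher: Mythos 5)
Your proposal is essentially the paper's own argument, just with the $\ell^1$-norm bound tracked explicitly rather than outsourced to the simplicial volume of a closed surface. The paper glues the two punctured-torus fillings (coming from Lemma \ref{scl0}) across the cylinder to form a map $\psi\colon S_2\to X$ of a genus-$2$ surface covering $\ol{e}$ with degree $n$, and then invokes $\|S_2\|=4$ to bound $\|\psi_*[S_2]\|_1$; you build the fundamental cycle directly as $-T_0^n+S^n+T_1^n$ and bound each piece by hand. The content is identical: the single key observation is that $b^n=[t^n,a]$ is a single commutator for every $n$, so the filling of the $b^n$-loop inside each $Y_i$ has topological type (a once-punctured torus) independent of $n$; your closing remark that this, not a cellular area bound, is what makes the argument work is exactly right and matches the spirit of the paper's remark following the proof about stable commutator length.
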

\begin{proof}
Let $S_2$ denote the closed orientable surface of genus $2$, and let $S_{1,1}$ denote the compact orientable surface of genus $1$ and with one boundary component (i.e., the torus with a hole). Notice that the fundamental group of $S_{1,1}$ is a free group of rank $2$ and the boundary $\dpar S_{1,1}$ corresponds to the conjugacy class of the commutator between two generators; thus, for every path-connected topological space $Z$ and for every loop $\zeta$ in $Z$, there is a continuous map from $S_{1,1}$ to $Z$ sending $\dpar S_{1,1}$ to $\zeta$ if and only if the homotopy class of $\zeta$ is a commutator in the fundamental group $\pi_1(Z)$ (this doesn't depend on the choice of a basepoint in $Z$).

Now fix $n\in\bN$: since by Lemma \ref{scl0} the element $b^n$ is a commutator, there is a continuous map $\varphi:S_{1,1}\rar Y$ that restricts to a degree-$n$ map between $\dpar S_{1,1}$ and the $1$-cell labeled by $b$. Let also $\theta:S^1\times[0,1]\rar S^1\times[0,1]$ be the map given by $(e^{ix},y)\mapsto(e^{inx},y)$ (i.e., the map that wraps the cylinder $n$ times around itself).

We now take the two copies $Y_0,Y_1$ of $Y$, and we consider two copies of the surface $S_{1,1}$ along with the two copies $\varphi_0:S_{1,1}\rar Y_0$ and $\varphi_1:S_{1,1}\rar Y_1$ of the map $\varphi$. We consider the surface $S_{1,1}\cup(S^1\times[0,1])\cup S_{1,1}$ obtained by gluing the two boundary components of the two copies of $S_{1,1}$ with the two boundary components of the cylinder. Notice that when performing the gluing process, we have to choose the orientation of the gluing maps on the boundaries. For $i\in\{0,1\}$, we orient the boundary of the $i$-th copy of $S_{1,1}$ in such a way that $\varphi_i:S_{1,1}\rar Y_i$ sends the boundary $\dpar S_{1,1}$ to $b^n$ (and not to $b^{-n}$). For $i\in\{0,1\}$ we give $S^1\times\{i\}$ the standard orientation of $S^1$. We now glue the boundary of the $i$-th copy of $S_{1,1}$ around $S^1\times\{i\}$ preserving the orientation.

We define the map $\psi:S_{1,1}\cup(S^1\times[0,1])\cup S_{1,1}\rar Y\cup_bY$ that coincides with $\varphi_0:S_{1,1}\rar Y_0$ and with $\varphi_1:S_{1,1}\rar Y_1$ on the two copies of the surface $S_{1,1}$, and that coincides with $\theta:S^1\times[0,1]\rar S^1\times[0,1]$ on the cylinder.
The map $\psi$ is well defined, since the maps $\varphi_0,\theta,\varphi_1$ agree on the subspaces which are glued together. Notice that $S_{1,1}\cup(S^1\times[0,1])\cup S_{1,1}$ is homeomorphic to $S_2$, and thus in particular we obtain a map $\psi:S_2\rar X$. Also notice that, by definition, the map $\psi:S_2\rar X$ covers the cell $\ol{e}$ with degree $n$.

Let $[S_2]_\mathrm{CW}^*\in H^2_\mathrm{CW}(S_2;\bR)$ be the real fundamental coclass in the cellular cohomology of $S_2$.
Since $\psi$ covers the cell $\ol{e}$ with degree $n$, we have $\psi^*[\alpha_\bR] = n [S_2]_\mathrm{CW}^*$.
Suppose by contradiction that $[\alpha_\bR] \in H^2_\mathrm{CW}(X;\bR) \cong H^2(G;\bR)$ is bounded.
This implies that there is a bounded \emph{singular} cocycle $\alpha_s \in C^2(X;\bR)$ whose class $[\alpha_s] \in H^2(X;\bR)$ corresponds to $[\alpha_\bR]$ under the canonical isomorphism between singular and cellular cohomology.
Under this isomorphism, $[S_2]_\mathrm{CW}^*$ corresponds to $[S_2]^* \in H^2(S_2;\bR)$, the real fundamental coclass in singular cohomology.
Since the diagram
\[
\begin{tikzcd}[contains/.style = {draw=none,"\in" description}, belongs/.style = {draw=none,"\ni" description}]
	n[S_2]^*_\mathrm{CW} \ar[r,contains] & H^2_\mathrm{CW}(S_2;\mathbb{R}) \ar[d,"\cong"] & H_\mathrm{CW}^2(X;\mathbb{R})  \ar[l,"\psi^*"]\ar[d,"\cong"] & {[\alpha_\mathbb{R}]} \ar[l,belongs]\\
	n[S_2]^* \ar[r,contains] & H^2(S_2;\mathbb{R}) & H^2(X;\mathbb{R}) \ar[l,"\psi^*"] & {[\alpha_s]} \ar[l,belongs]
\end{tikzcd}
\]
commutes, we also have $\psi^*[\alpha_s] = n [S_2]^*$.

Let $L \in \bR$ be such that $\abs{\alpha_s(\sigma)} \le L$ for every singular $2$-simplex $\sigma$ in $X$, and notice that also the norm of the pull-back $\psi^*\alpha_s \in C^2(S_2;\bR)$ is bounded by the same constant $L$.
Let $[S_2]\in H_2(S_2;\bR)$ be the fundamental class of $S_2$ in singular cohomology, and let $C \in \bR$ be the $\ell^1$-norm of a representative of $[S_2]$.
Since $\psi^*[\alpha_s] = n [S_2]^*$, we have that
\[n\ =\ \langle \psi^*[\alpha_s],\ [S_2] \rangle\ \le\ LC,\]
where $\langle -,- \rangle$ denotes the Kronecker pairing.
Since this cannot hold for all $n\in\bN$, we get a contradiction.
\end{proof}

\begin{myrmk}
In the proof of Proposition \ref{notbounded} we use the fact that $b^n$ is a commutator for every $n\in\bN$, but this hypothesis can be relaxed; in fact, the same result can be obtained using only the weaker hypothesis that the stable commutator length of $b$ is zero (see Remark \ref{rmk:hypothesis} for the definition of stable commutator length).
The proof is essentially the same: if $b^n$ is the product of $r_n$ commutators, then one can use in place of $S_{1,1}$ the oriented surface with genus $r_n$ and one boundary component.
\end{myrmk}

\begin{myrmk}
	Since by Proposition \ref{notbounded} the cohomology class $[\alpha_\bR]$ is not bounded, it follows that the class $[\alpha_\bR]$ (and thus also $[\alpha]$) is non-trivial.
	We point out that, in order to obtain that $[\alpha_\bR]$ is non-trivial, a weaker condition on $b$ is sufficient: it is enough to assume that the stable commutator length of $b$ is finite; in other words, that there is a power of $b$ which is a product of commutators in $H$.
\end{myrmk}

\subsection{The cohomology class and the area function}\label{sec:area}

It remains to prove that the class $[\alpha_\bR] \in H^2_\mathrm{CW}(X;\bR) \cong H^2(G;\bR)$ is weakly bounded.
The key element to the proof will be a result in \cite{Mil2021} that relates weakly bounded classes to a linear isoperimetric inequality in the universal cover $\ot X$.
In this subsection, we introduce a notion of ``area'' of combinatorial circuits in $\ot X$, and we show how it is related to our cohomology class.

We begin by defining the combinatorial objects we are going to work with.
A \textbf{path} in a CW complex is a finite non-empty list of $0$-cells $p = (p_0, p_1, \dots, p_k)$ such that, for every $i \in \{1,\dots,k\}$, we have $p_{i-1}\not=p_i$ and there is a $1$-cell having $p_{i-1}$ and $p_{i}$ as endpoints.
We denote by $\len(p)$ its length, which in the example above is equal to $k$.
We call $p$ a \textbf{circuit} if its first and last $0$-cells coincide.

Recall that $X = Y \cup_b Y$ is obtained by gluing a cylinder $S^1\times [0,1]$ and two copies $Y_0, Y_1$ of $Y$.
We call $l = \{1\} \times [0,1] \subseteq S^1\times [0,1]$ the only $1$-cell not contained in $Y_0$ or $Y_1$.
We also fix on $l$ the usual orientation of $[0,1]$; intuitively, the positive orientation corresponds to going away from $Y_0$ towards $Y_1$.

Recall that $\ot{X}$ consists of infinitely many disjoint copies of $\ot{Y}$ and strips homeomorphic to $\bR \times [0,1]$ covering the cylinder $S^1\times[0,1]$.
The covering map $\ot{X}\to X$ induces a cellular structure on every strip, in the same way as the covering map from $\bR\times[0,1]$ to $S^1\times[0,1]$ induces a cellular structure on $\bR\times[0,1]$.
Consider a strip $s$, and choose a homeomorphism $s\cong\bR\times[0,1]$ preserving the covering map to $S^1\times[0,1]$ (and thus preserving the cellular structure too); there are infinitely many such homeomorphisms: we choose one.
For every $n \in \mathbb{Z}$, define the $1$-cell $l_n$ in $s$ to be the one corresponding to the $1$-cell $\{n\}\times[0,1]\subseteq\bR\times[0,1]$ (see Figure \ref{fig:area}).
Notice that $l_n$ is a lifting of $l$, and in particular we can give $l_n$ the same orientation as $l$, for all $n\in\bZ$.

\begin{mydef}\label{def-area}
    Let $p = (p_0, p_1, \dots, p_k)$ be a circuit in $\ot{X}$.
    Let $s$ be a strip in $\ot{X}$ and choose an homeomorphism $s\cong\bR\times[0,1]$ preserving the covering map to $S^1\times[0,1]$.
    We define $\area_s(p) \in \bZ$ by summing the following contributions, for $i \in \{1, \dots, k\}$:
    \begin{enumerate}
        \item\label{positive} If $\partial l_n = p_{i} - p_{i-1}$ for some $n \in \bZ$, we add $n$;
        \item\label{negative} If $\partial l_n = - p_{i} + p_{i-1}$ for some $n \in \bZ$, we add $-n$;
        \item Otherwise, the index $i$ does not contribute to the sum.
    \end{enumerate}
\end{mydef}

\begin{myrmk}\label{rmk:plus_minus}
	Let $p$ and $s$ be as in Definition \ref{def-area}.
	Recall that there is a projection $\tau:\ot X\rar T$ of $\ot X$ onto a tree, and that $s$ is sent to an edge of $T$. Removing the interior of $\tau(s)$ divides $T$ into two connected components $T_0,T_1$. Whenever $p$ goes from $T_0$ to $T_1$ (i.e., for each $i$ such that $\tau(p_i)\in T_1$ and $\tau(p_{i-1})\in T_0$) we have a summand in the definition of $\area_s(p)$ according to case \ref{positive} of Definition \ref{def-area}; whenever $p$ goes from $T_1$ to $T_0$ we have a summand in the definition of $\area_s(p)$ according to case \ref{negative} of Definition \ref{def-area}. The number of times $p$ goes from $T_0$ to $T_1$ must be the same as the number of times $p$ goes from $T_1$ to $T_0$: this means that in the sum defining $\area_s(p)$ the cases \ref{positive} and \ref{negative} occur the same number of times.
\end{myrmk}

\begin{figure}[ht]
    \centering
    \includegraphics[scale=1]{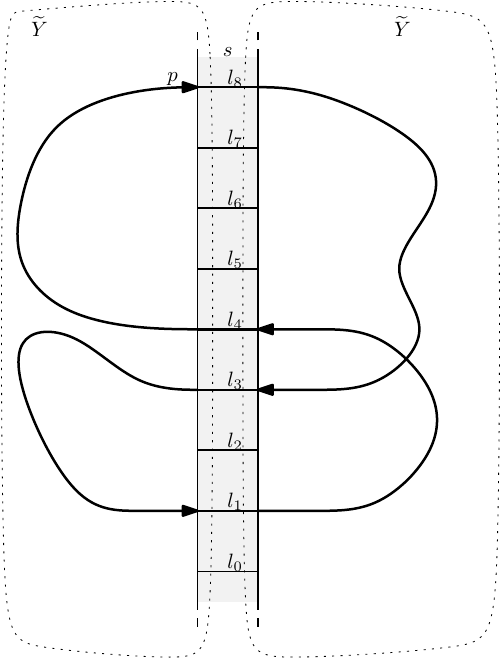}
    \caption{An example of a circuit $p$ that crosses a strip $s$ four times. Edges $l_1,l_8$ are crossed with positive orientation, while edges $l_3,l_4$ are crossed with negative orientation; thus in this case $\area_s(p)=1+8-3-4=2$.
    Informally, $\area_s(p)$ is equal to the number of squares of $s$ ``enclosed'' by $p$, counted with sign.}
    \label{fig:area}
\end{figure}

\begin{mylemma}\label{lemma:area_s-independent}
The value of $\area_s(p)$ does not depend on the chosen homeomorphism $s\cong\bR\times[0,1]$.
\end{mylemma}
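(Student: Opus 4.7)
The plan is to exploit the fact that different isomorphisms $s \cong \bR\times[0,1]$ compatible with the covering to $S^1\times[0,1]$ differ only by a deck transformation of this covering, i.e.\ by a translation $(x,t)\mapsto(x+k,t)$ for some $k\in\bZ$. Under such a reparametrization, the $1$-cell formerly labeled $l_n$ gets relabeled $l_{n-k}$. First I would compute explicitly how the sum in Definition \ref{def-area} is affected: each index $i$ contributing $+n$ in case \ref{positive} now contributes $n-k$, and each index contributing $-n$ in case \ref{negative} now contributes $-n+k$. Writing $N_+$ and $N_-$ for the numbers of indices falling into cases \ref{positive} and \ref{negative} respectively, the overall change equals $k(N_- - N_+)$.

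Thus the lemma reduces to proving $N_+ = N_-$ for any circuit $p$ and any strip $s$. For this, I would use the projection $\tau \st \ot X \rar T$ to the Bass-Serre tree. The strip $s$ projects onto a single edge $e_s$ of $T$; orient $e_s$ from the vertex corresponding to the $Y_0$-side of $s$ to the one corresponding to the $Y_1$-side. I would then check the following dichotomy for each edge $(p_{i-1},p_i)$ of the circuit: if this edge lies in a copy of $\ot Y$ or on the horizontal boundary of some strip, then it is entirely contained in the preimage of a single vertex of $T$; otherwise, it is one of the $l_n$'s, and its projection traverses $e_s$ with the matching orientation exactly when $\dpar l_n = p_i - p_{i-1}$ (case \ref{positive}) and with the opposite orientation exactly in case \ref{negative}. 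Hence $N_+$ and $N_-$ coincide with the numbers of times $\tau\circ p$ crosses $e_s$ with the two orientations.

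Since $\tau\circ p$ is a closed walk on the tree $T$, it traverses every oriented edge of $T$ the same number of times in each direction. Applied to $e_s$ this gives $N_+ = N_-$, which concludes the proof. The only mildly delicate point, which I would verify carefully, is the sign convention matching: one must check that the orientation chosen on $l$ (pointing from $Y_0$ to $Y_1$) lifts consistently to the $l_n$'s so that positively-oriented crossings in the sense of Definition \ref{def-area} really correspond to traversals of $e_s$ in the direction from the $Y_0$-vertex to the $Y_1$-vertex; this is immediate from the fact that the covering $\bR\times[0,1]\to S^1\times[0,1]$ is orientation-preserving on each vertical segment.
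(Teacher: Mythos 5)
Your proof is correct and takes essentially the same approach as the paper: isolate the change of $\area_s(p)$ under a change of isomorphism as a fixed multiple of $N_+ - N_-$, then use the projection to the Bass--Serre tree $T$ to conclude $N_+ = N_-$. The paper phrases the tree argument by observing that removing the interior of $\tau(s)$ disconnects $T$ into two components, forcing balanced crossings; your phrasing (a closed walk on a tree traverses each edge equally often in each direction) is the same fact stated slightly differently.
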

\begin{proof}
Suppose we are given two homeomorphisms $\eta_1,\eta_2:s\rar\bR\times[0,1]$, and suppose that each of them commutes with the covering space projections $s\rar S^1\times[0,1]$ and $\bR\times[0,1]\rar S^1\times[0,1]$. Then $\eta_1,\eta_2$ only differ by an integer translation along the $\bR$ component, i.e., $\eta_2=\eta_1+(r,0)$ for some $r\in\bZ$.

If we compute $\area_s(p)$ using the identification $\eta_2$ instead of $\eta_1$, each summand coming from case \ref{positive} increases by $r$, and each summand coming from case \ref{negative} decreases by $r$. Since cases \ref{positive} and \ref{negative} occur the same number of times (Remark \ref{rmk:plus_minus}), the sum $\area_s(p)$ remains unchanged, as desired.
\end{proof}

The above Definition \ref{def-area} gives us a \textit{local} notion of area, related to a given strip. We now define a \textit{global} notion of area, given by the sum of all the local areas.

\begin{mydef}
We define $\area(p) = \sum_s \area_s(p)$ where $s$ varies among all the strips in $\ot{X}$.
\end{mydef}

\begin{myrmk}\label{rmk:finite}
	A circuit $p$ can touch only a finite number of strips, because it consists of a finite list of vertices, and each vertex belongs to exactly one strip.
	In particular the sum defining $\area(p)$ has a finite number of non-zero summands.
\end{myrmk}

If $p$ is a path in $\ot{X}$, it uniquely determines a sequence of $1$-cells; we denote by $\ol{p} \in C_1^\mathrm{CW}(\ot{X};\bR)$ the cellular $1$-chain given by the sum of these cells, with a sign depending on the direction in which the $1$-cell is crossed.
If $p$ is a circuit, then $\ol{p}$ is a $1$-cycle, and since $\ot{X}$ is simply connected, this implies that there is $c \in C_2^\mathrm{CW}(\ot{X};\bR)$ such that $\ol{p} = \partial c$.

We denote by $\ot{\alpha}_\bR \in C_\mathrm{CW}^2(\ot{X};\bR)$ the pull-back of $\alpha_\bR \in C_\mathrm{CW}^2(X;\bR)$ via the covering map.

\begin{myrmk}\label{rmk:alpha-exact}
Since all the homotopy groups of $\ot{X}$ are trivial, by Hurewicz's theorem all the homology groups of $\ot{X}$ are trivial too. Since $\alpha_\bR$ is closed, we have that $\ot{\alpha}_\bR$ is closed too; but since the homology groups of $\ot{X}$ are all trivial, this implies that $\ot{\alpha}_\bR$ is exact.
\end{myrmk}

\begin{mylemma}\label{alpha_area}
    Let $p = (p_0, p_1, \dots, p_k)$ be a circuit in $\ot{X}$.
    Denote by $\ol{p} \in C_1^\mathrm{CW}(\ot{X};\bR)$ the cellular $1$-cycle induced by $p$.
    Let $c \in C_2^\mathrm{CW}(\ot{X};\bR)$ be such that $\ol{p} = \partial c$.
    Then $\ot{\alpha}_\bR(c) = \area(p)$.
\end{mylemma}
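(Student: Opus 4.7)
My plan is to prove the equality strip by strip, reducing the question to a local computation on a single strip. Since $\alpha_\bR$ is supported on the unique $2$-cell $\ol{e}$ of the cylinder in $X$, its pull-back $\ot{\alpha}_\bR$ vanishes on every $2$-cell of $\ot X$ that is not contained in some strip. Writing $c_s$ for the part of $c$ supported on the $2$-cells of the strip $s$, I therefore have $\ot{\alpha}_\bR(c) = \sum_s \ot{\alpha}_\bR(c_s)$, a finite sum. Since by definition $\area(p) = \sum_s \area_s(p)$, it suffices to show that $\ot{\alpha}_\bR(c_s) = \area_s(p)$ for every strip $s$.

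Next I would fix a strip $s$, identify $s \cong \bR \times [0,1]$, and index its $2$-cells by $e_n = [n,n+1]\times[0,1]$, with boundary $\partial e_n = b_n + l_{n+1} - t_n - l_n$ for suitably oriented horizontal $b$-edges $b_n$ and $t_n$. Writing $c_s = \sum_n a_n e_n$ (a finite sum with $a_n \in \bR$), I would read off immediately that $\ot{\alpha}_\bR(c_s) = \sum_n a_n$, because $\ot{\alpha}_\bR$ takes the value $1$ on each $e_n$.

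The heart of the proof is a discrete Stokes-type argument relating $\sum_n a_n$ to the boundary data of $c_s$. Let $m_n \in \bZ$ be the coefficient of $l_n$ in the cellular $1$-cycle $\bar p$. The vertical edges $l_n$ appear in the cellular boundary of no $2$-cell of $\ot X$ outside the strip $s$ itself, so the relation $\partial c = \bar p$ restricted to these edges yields the recurrence $m_n = a_{n-1} - a_n$ for every $n \in \bZ$. Because $a_n$ vanishes for $|n|$ sufficiently large, this telescopes to $a_n = \sum_{k>n} m_k$; in particular $\sum_k m_k = 0$.

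Finally, a standard exchange of summation (Abel summation) gives
\[ \ot{\alpha}_\bR(c_s) \;=\; \sum_n a_n \;=\; \sum_n \sum_{k>n} m_k \;=\; \sum_k k\, m_k, \]
and the rightmost expression is precisely $\area_s(p)$: by Definition \ref{def-area}, each crossing of $l_k$ contributing $\pm 1$ to $m_k$ contributes $\pm k$ to $\area_s(p)$. I do not anticipate any serious obstacle; the only delicate point is the sign and orientation bookkeeping in the cellular boundary map, which makes it important to fix conventions for $\partial e_n$ once and for all.
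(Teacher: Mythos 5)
Your proof is correct, but it is organized differently from the paper's. The paper does not restrict the given chain $c$ to the strips; instead, for each strip $s$ it \emph{constructs} an explicit $2$-chain (with coefficient $\#\{i \in I^+ \st f(i)\ge n+1\}-\#\{i\in I^-\st f(i)\ge n+1\}$ on the cell $Q_n$), computes its $\ot{\alpha}_\bR$-value to be $\area_s(p)$, checks that its boundary agrees with $\ol{p}$ on the vertical edges $l_n$, and then compares with the arbitrary filling $c$ by invoking the exactness of $\ot{\alpha}_\bR$ (contractibility of $\ot{X}$) together with simple-connectivity of the copies of $\ot{Y}$ to absorb the discrepancy. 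You work directly with $c$: the only structural input you need is that each $l_n$ lies in the boundary of no $2$-cell outside its own strip, which yields the recurrence $m_n=a_{n-1}-a_n$, and the rest is telescoping plus the same Abel-summation computation that appears in the paper (the $M(\#I^+-\#I^-)$ term there plays exactly the role of your $\sum_k m_k=0$, which you correctly noted is needed for the exchange of sums; you even get this identity for free from the telescoping, where the paper derives it from a tree-separation argument). Your route buys a slightly stronger and cleaner statement -- $\ot{\alpha}_\bR(c)$ depends only on the coefficients of $\partial c$ on the vertical strip edges, with no appeal to exactness or to simple-connectivity -- while the paper's route produces an explicit preferred filling of the circuit, which is perhaps more geometric. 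The one point to nail down in a final write-up is the sign and orientation convention for $\partial e_n$ relative to the orientations of $l$ and $\ol{e}$, which you flagged yourself and which the paper also treats implicitly.
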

\begin{proof}
    Let $s \cong \bR\times [0,1]$ be a strip in $\ot{X}$.
    As in Definition \ref{def-area}, we denote by $l_n$ the $1$-cell $\{n\}\times[0,1] \subseteq s$.
    Let $I^+ \subseteq \{1,\dots,k\}$ be the subset of indices $i$ such that $p_i - p_{i-1} = \partial l_n$ for some $n \in \bZ$, and set $f(i) = n$ for such indices.
    That is, $i \in I^+$ if the $i$-th step of $p$ crosses $s$ positively along the $1$-cell $l_{f(i)}$.
    Similarly, let $I^- \subseteq \{1,\dots,k\}$ be the subset of indices $i$ with $-p_i + p_{i-1} = \partial l_n$ for some $n \in \bZ$, and set $f(i) = n$ for such indices.
    By definition, we have
    \[ \area_s(p)\ =\ \sum_{i \in I^+} f(i) - \sum_{i \in I^-} f(i). \]
    
    For every $n \in \bZ$ we denote by $Q_n = [n,n+1]\times[0,1] \subseteq s$ the $2$-cell between $l_n$ and $l_{n+1}$ (see Figure \ref{fig:area}).
    We orient $Q_n$ in such a way that $l_n$ and $l_{n+1}$ appear respectively with coefficients $-1$ and $+1$ in $\partial Q_n$.
    Consider the following $2$-chain:
    \[c_s\ =\ \sum_{n \in \bZ}\left( \#\{i \in I^+ \st f(i) \ge n+1\} - \#\{i \in I^- \st f(i) \ge n+1\} \right) Q_n.\]
    The sum has a finite number of non-zero terms, because if $n$ is small enough (say, $n < -M$ for a suitable integer $M$), then the two terms whose difference is the coefficient of $Q_n$ are the cardinalities of $I^+$ and $I^-$, which are equal (Remark \ref{rmk:plus_minus}); on the other hand, if $n$ is large enough then both terms vanish.
    We now evaluate $\ot{\alpha}_\bR$ at $c_s$:
    \begin{align*}
        \ot{\alpha}_\bR(c_s)\ &=\ \sum_{n \in \bZ} \left( \#\{i \in I^+ \st f(i) \ge n+1\} - \#\{i \in I^- \st f(i) \ge n+1\} \right)\\
        &=\ \sum_{n \ge -M} \left( \#\{i \in I^+ \st f(i) \ge n+1\} - \#\{i \in I^- \st f(i) \ge n+1\} \right)\\
        &=\ \sum_{i \in I^+} \#\{n \ge -M \st f(i) \ge n+1\} - \sum_{i \in I^-} \#\{n \ge -M \st f(i) \ge n+1\}\\
        &=\ \sum_{i \in I^+} (f(i)+M) - \sum_{i \in I^-} (f(i)+M)\\
        &=\ M (\#I^+ - \#I^-) + \sum_{i \in I^+} f(i) - \sum_{i \in I^-} f(i)\\
        &=\ \area_s(p).
    \end{align*}
    
    The $2$-chain $c_s$ also has the following important property:
    for every $n \in \bZ$, the $1$-cell $l_n$ appears in $\partial c_s$ with coefficient
    $\#\{i \in I^+ \st f(i) = n\} - \#\{i \in I^- \st f(i) = n\}$,
    which is equal to the coefficient of $l_n$ in $\ol{p} = \partial c$.
    
    Let $c' = \sum_s c_s$, with $s$ varying in the set of strips in $\ot{X}$.
    The sum is finite because $c_s = 0$ unless the circuit $p$ crosses $s$ at some point, and $p$ crosses only finitely many strips (Remark \ref{rmk:finite}).
    The $1$-cycle $\partial(c-c')$ is supported in the (disconnected) subspace $\sqcup\ot{Y} \subseteq \ot{X}$ whose components are the various copies of $\ot{Y}$.
    Since every component of $\sqcup\ot{Y}$ is simply connected, there is a $2$-chain $c''$ with support in $\sqcup\ot{Y}$ such that $\partial c'' = \partial(c-c')$.
    This implies that $\ot{\alpha}_\bR(c'') = \ot{\alpha}_\bR(c-c')$, since $\ot{\alpha}_\bR$ is exact by Remark \ref{rmk:alpha-exact}.
    We now have
    \[\ot{\alpha}_\bR(c)\ =\ \ot{\alpha}_\bR(c'') + \ot{\alpha}_\bR(c')\ =\ 0 + \sum_{s\text{ strip}}\ot{\alpha}_\bR(c_s)\ =\ \sum_{s\text{ strip}}\area_s(p)\ =\ \area(p),\]
    which is the desired equality.
\end{proof}

\subsection{The cohomology class is weakly bounded}\label{sec:wb}

In this section we prove a linear isoperimetric inequality for the notion of area introduced in the previous section. We use this inequality to show that $[\alpha_\bR] \in H^2_\mathrm{CW}(X;\bR) \cong H^2(G;\bR)$ is weakly bounded.


Let $p=(p_0,\dots,p_k)$ be a circuit in $\ot X$ and let $V$ be a copy of $\ot Y$ in $\ot X$: we define $\len_V(p)$ to be the number of indices $i\in\{1,\dots,k\}$ such that $p_{i-1},p_{i}$ both belong to $V$.

\begin{mylemma}\label{area-length}
Let $p=(p_0,\dots,p_k)$ be a circuit in $\ot X$. 
Let $V$ be a copy of $\ot Y$ in $\ot X$, and let $\lambda$ be a $b$-line in $V$.
Denote by $S_\lambda$ the set of the strips $s$ such that one side of $s$ is glued on a $b$-line in $V$ which is parallel to $\lambda$.
Then we have
$$\left\lvert \sum\limits_{s\in S_\lambda}\area_s(p) \right\rvert\ \le\ \len_V(p).$$
\end{mylemma}
\begin{proof}
Recall that $X=Y_0\cup_bY_1$ and without loss of generality assume that $V$ projects to $Y_0$. Also assume that $p_0=p_k$ belongs to $V$. Choose a basepoint in $V$ belonging to the $b$-line $\lambda$; this gives an identification between the vertices of $V$ and the elements of $H$, and in particular the $\mappapi$ map $\pi:H\rar\bZ$, as defined in Subsection \ref{sec:construction}, induces a map from the set of the vertices of $V$ to $\bZ$.

Let $J$ be the set of indices $j\in\{0,\dots,k\}$ such that $p_j\in V$. We subdivide $J$ into segments of consecutive indices, i.e., we consider the unique writing $J=[x_0,y_0]\cup\dots\cup[x_r,y_r]$ for some $r,x_0,y_0,\dots,x_r,y_r\in\bN$ with $0=x_0\le y_0\le x_1\le y_1\le\dots\le x_r\le y_r=k$ and with $x_{i}-y_{i-1}\ge 2$ for $i=1,\dots,r$. This means that $p_{x_0},\dots,p_{y_0},p_{x_1},\dots,p_{y_1},\dots,p_{x_r},\dots,p_{y_r}$ is the set of vertices of $p$ that belong to $V$, written with the indices in increasing order. In particular we have $$\sum\limits_{i=0}^r\abs{y_i-x_i}\ =\ \len_V(p).$$

We now look at the sequence of integers $\pi(p_{x_0}),\pi(p_{y_0}),\pi(p_{x_1}),\pi(p_{y_1}),\dots,\pi(p_{x_r}),\pi(p_{y_r})$. By Lemma \ref{pi-lipschitz} the map $\pi$ is $1$-Lipschitz, and thus we have $\abs{\pi(p_{y_i})-\pi(p_{x_i})}\le\abs{y_i-x_i}$ for $i=0,\dots,r$.

\begin{figure}[ht]
    \centering
    \includegraphics[scale=1]{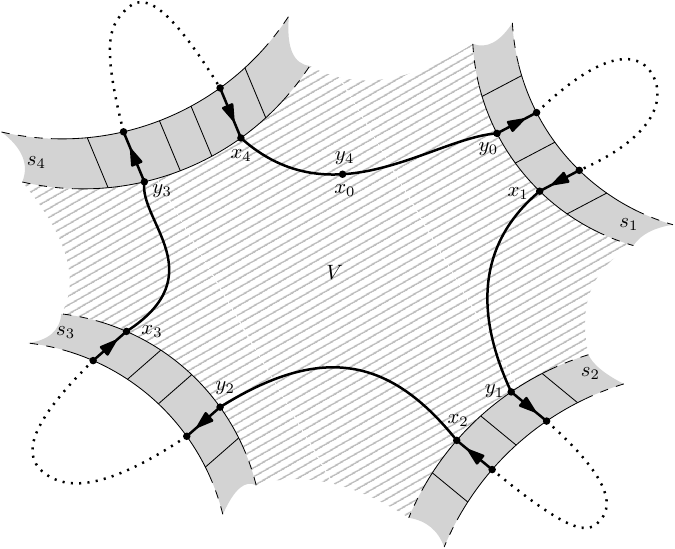}
    \caption{An example of a circuit $p$ entering $V$ four times.}
    \label{fig:circuit}
\end{figure}

Recall that we have a projection map $\tau:\ot X\rar T$ onto a tree $T$. The path $p=(p_0,\dots,p_k)$ induces a sequence of vertices $\tau(p_0),\dots,\tau(p_k)$ in the tree $T$, where $\tau(p_{j-1}),\tau(p_j)$ either coincide or are adjacent in $T$, for $j=1,\dots,k$. Fix $i\in\{1,\dots,r\}$ and observe that $\tau(p_{y_{i-1}})=\tau(p_{x_i})=\tau(V)$ and none of the vertices $\tau(p_{y_{i-1}+1}),\dots,\tau(p_{x_i-1})$ coincide with $\tau(V)$: since $T$ is a tree, this implies that $\tau(p_{y_{i-1}+1})=\tau(p_{x_i-1})$. It follows (see also Figure \ref{fig:circuit}) that the vertices $p_{y_{i-1}},p_{y_{i-1}+1},p_{x_{i}-1},p_{x_{i}}$ all belong to a common strip $s_i$ with one side glued on $V$, for $i=1,\dots,r$. We also observe that these are the only cases where the path $p$ crosses a strip with a side glued onto $V$.


If $s_i$ is glued onto $V$ along a $b$-line which is not parallel to $\lambda$, then $\pi(p_{x_{i}})-\pi(p_{y_{i-1}})=0$ since $p_{x_{i}},p_{y_{i-1}}$ belong to a same $b$-line which is sent to a constant value.
If $s$ is glued onto $V$ along a $b$-line $\lambda'$ parallel to $\lambda$, then there are two edges $l_n,l_m$ of $s_i$ with $\dpar l_n=p_{y_{i-1}+1}-p_{y_{i-1}}$ and $\dpar l_m=p_{x_{i}-1}-p_{x_{i}}$.
Since the map $\pi$ is a translation on the vertices of $\lambda'$, it follows that $\pi(p_{x_{i}})-\pi(p_{y_{i-1}})=m-n$, and notice that $m$ and $-n$ are two summands that appear in the definition of $\area_s(p)$.
For each strip $s$ glued to $V$ onto a $b$-line parallel to $\lambda$, each summand in the definition of $\area_s(p)$ appears exactly once when $i$ ranges from $1$ to $r$; this implies that
$$\sum\limits_{s\in S_\lambda}\area_s(p)\ =\ \sum\limits_{i=1}^{r}\pi(p_{x_{i}})-\pi(p_{y_{i-1}}).$$
It follows that
\begin{align*}
\left\lvert \sum\limits_{s\in S_\lambda}\area_s(p) \right\rvert\ &=\ \left\lvert \sum\limits_{i=1}^{r}\pi(p_{x_{i}})-\pi(p_{y_{i-1}}) \right\rvert\ =\ \left\lvert -\sum\limits_{i=0}^{r}\pi(p_{y_i})-\pi(p_{x_i}) \right\rvert\\
&\le\ \sum\limits_{i=0}^r\abs{\pi(p_{y_i})-\pi(p_{x_i})}\ \le\ \sum\limits_{i=0}^r\abs{y_i-x_i}\ =\ \len_V(p),
\end{align*}
where we used that $p_{y_r}=p_k=p_0=p_{x_0}$. The conclusion follows.
\end{proof}

Recall that we have a quotient map $\tau:\ot X\rar T$ that collapses each copy of $\ot Y$ to a point and each strip $\bR\times[0,1]$ to a segment, and that the quotient space $T$ is a tree. We now introduce a coloring, i.e., an equivalence relation, on the set of the edges of $T$.

Let $v$ be a vertex of $T$, corresponding to a copy $V$ of $\ot Y$. Let $e_1,e_2$ be edges of $T$ adjacent to the vertex $v$, corresponding to two strips $s_1,s_2\cong\bR\times[0,1]$ glued onto two $b$-lines $\lambda_1,\lambda_2$ in $V$. We say that $e_1,e_2$ are \textbf{$v$-parallel} if $\lambda_1,\lambda_2$ are parallel $b$-lines in $V$. Notice that being $v$-parallel is an equivalence relation on the set of edges of $T$ adjacent to $v$.

We now consider, on the set of edges of $T$, the equivalence relation generated by all the relations of being $v$-parallel for $v$ vertex of $T$; this gives us a \textbf{coloring} of the edges of $T$. This means that two edges $e,e'$ have the same color if and only if there is a sequence of edges $e=e_0,e_1,\dots,e_{k-1},e_k=e'$ in $T$ such that, for $i=0,\dots,k-1$, the edges $e_i,e_{i+1}$ have a common vertex $v_i$ and are $v_i$-parallel. In particular, since $T$ is a tree, the following holds: given two edges $e,e'$ of $T$ adjacent to a common vertex $v$, we have that $e,e'$ have the same color if and only if $e,e'$ are $v$-parallel.

\begin{mylemma}\label{coloring-trees}
Given the above coloring of the edges of $T$, there is a partial coloring of the vertices of $T$ (using the same colors) such that the following property holds: for each edge $e$ of $T$, the edge $e$ has exactly one endpoint of its same color.
\end{mylemma}
\begin{proof}
Fix a vertex $v_0$ of $T$ and let $S(v_0,n)$ be the set of vertices of $T$ which have distance exactly $n$ from $v_0$. We define by induction on $n$ a partial coloring on $S(v_0,n)$.

For $S(v_0,0)=\{v_0\}$ we just leave the vertex $v_0$ uncolored. Suppose we have defined the partial coloring on $S(v_0,n)$. Take a vertex $v\in S(v_0,n+1)$: since $T$ is a tree, then there is a unique edge $e$ connecting $v$ to a vertex $v'\in S(v_0,n)$. If $v'$ has the same color as $e$, then we leave $v$ uncolored; otherwise, we give $v$ the same color as $e$.

Since $S(v_0,n)$ for $n\in\bN$ form a partition of the set of vertices of $T$, this defines a partial coloring on the set of vertices of $T$. For how the partial coloring is defined, it is immediate to see that it has the desired property.
\end{proof}

Now we have a coloring of the edges and of a subset of vertices of $T$ with the following properties:
given two strips $s_1,s_2$ with a side glued onto a common copy $V$ of $\ot Y$, we have that $\tau(s_1),\tau(s_2)$ have the same color if and only if the two strips $s_1,s_2$ are glued onto parallel $b$-lines in $V$;
for each strip $s$ with the sides glued on two copies $V_1,V_2$ of $\ot Y$, we have that exactly one of $\tau(V_1),\tau(V_2)$ has the same color as $\tau(s)$.
We now use this coloring to prove the following fundamental proposition.

%

\begin{myprop}\label{prop:isoperimetric}
    Let $p = (p_0, \dots, p_k)$ be a circuit in $\ot{X}$.
    Then $\abs{\area(p)} \le \len(p)$.
\end{myprop}
\begin{proof}
Suppose $s$ is a strip of the form $\bR\times[0,1]$ in $\ot X$.
The edge $\tau(s)$ has a certain color, and exactly one of its endpoints has the same color: call $V(s)$ the $\tau$-preimage of that vertex; this means that $V(s)$ is a copy of $\ot Y$, and $\tau(V(s))$ is a vertex of $T$ with the same color as the edge $\tau(s)$.

Let now $V$ be a copy of $\ot Y$ in $\ot X$. Suppose that there is a strip $s$ with $V(s)=V$: then for every strip $s'$ we have that $V(s')=V$ if and only if $s'$ is glued on $V$ and $\tau(s')$ has the same color as $\tau(s)$; notice that, by definition, $\tau(s')$ has the same color as $\tau(s)$ if and only if they are glued onto two parallel $b$-lines in $V$. Thus we can apply Lemma \ref{area-length} and we have that
$$\left\lvert \sum\limits_{\substack{s\text{ with} \\ V(s)=V}}\area_s(p) \right\rvert\ \le\ \len_V(p),$$
where the sum on the left is finite since $p$ can only touch a finite number of strips. It follows that
$$\left\lvert\area(p)\right\rvert\ =\ \left\lvert\sum\limits_{\substack{s\text{ strip}}}\area_s(p)\right\rvert\ \le\  \sum\limits_{\substack{V\text{ copy}\\ \text{of }\ot Y}}\left\lvert\sum\limits_{\substack{s\text{ with} \\ V(s)=V}}\area_s(p)\right\rvert \ \le\ \sum\limits_{\substack{V\text{ copy}\\ \text{of }\ot Y}}\len_V(p)\ \le\ \len(p),$$
where the copies $V$ of $\ot Y$ for which there is no strip $s$ with $V(s)=V$ are meant to contribute with a zero to the sums (in particular the sums have only a finite number of non-zero summands, since the circuit $p$ touches a finite number of copies of $\ot Y$ and of strips by Remark \ref{rmk:finite}). The conclusion follows.
\end{proof}

\begin{myprop}
    The cohomology class $[\alpha_\bR] \in H^2_\mathrm{CW}(X;\bR) \cong H^2(G;\bR)$ is weakly bounded.
\end{myprop}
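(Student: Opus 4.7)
The plan is to apply the characterization recalled in Section \ref{sec:cohomology}: $[\alpha_\bR]$ is weakly bounded if and only if $\iota^2[\alpha_\bR] = 0$ in $H^2_\Linfs(X;\bR)$. Because $X$ has only finitely many $1$- and $2$-cells, a $G$-equivariant $\ell^\infty(G,\bR)$-valued cellular $k$-cochain on $\ot X$ for $k \in \{1,2\}$ corresponds, via evaluation at $1\in G$, to a globally bounded $\bR$-valued cellular $k$-cochain on $\ot X$. It therefore suffices to exhibit a bounded cellular $1$-cochain $\beta$ on $\ot X$ satisfying $\delta\beta = \ot\alpha_\bR$.

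I would produce $\beta$ by a Hahn-Banach-type duality argument. Since $\ot X$ is contractible (being the universal cover of a $K(G,1)$), every cellular $1$-cycle $z$ bounds a $2$-chain, any two such fillings differ by a $2$-cycle, and the cocycle $\ot\alpha_\bR$ vanishes on $2$-cycles because $H_2(\ot X;\bR) = 0$. Hence $\ell(z) := \ot\alpha_\bR(c)$, for any filling $c$ of $z$, is a well-defined linear functional on the space $Z_1$ of cellular $1$-cycles. Assuming the estimate $|\ell(z)| \le \|z\|_1$ for all $z \in Z_1$, Hahn-Banach extends $\ell$ to a bounded linear functional on all cellular $1$-chains, i.e., a bounded cellular $1$-cochain $\beta$ with $\beta|_{Z_1} = \ell$. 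The identity $\delta\beta(c) = \beta(\partial c) = \ell(\partial c) = \ot\alpha_\bR(c)$ then yields $\delta\beta = \ot\alpha_\bR$.

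The core of the argument is thus the isoperimetric inequality $|\ot\alpha_\bR(c)| \le \|\partial c\|_1$ for every cellular $2$-chain $c$. By linearity (clearing denominators) it suffices to treat integer $1$-cycles $z = \partial c$. A standard Eulerian-decomposition argument---balance in- and out-degrees at each vertex and extract circuits greedily---writes $z$ as a finite sum $\sum_{i=1}^r \ol{p_i}$ of circuit $1$-cycles with $\sum_i \len(p_i) = \|z\|_1$. I would then pick individual fillings $c_i$ of the $\ol{p_i}$, observe that $c - \sum_i c_i$ is a $2$-cycle, and combine Lemma \ref{alpha_area} with Proposition \ref{prop:isoperimetric} to obtain
\[
|\ot\alpha_\bR(c)| = \bigg|\sum_i \ot\alpha_\bR(c_i)\bigg| = \bigg|\sum_i \area(p_i)\bigg| \le \sum_i \len(p_i) = \|z\|_1.
\]

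The main technical hurdle I expect is executing the Eulerian circuit decomposition cleanly with the tight equality $\sum_i \len(p_i) = \|z\|_1$, together with the reduction from real to integer coefficients; once those are in place, the conclusion follows as a routine combination of the isoperimetric inequality for circuits, Lemma \ref{alpha_area}, and Hahn-Banach.
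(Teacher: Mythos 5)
Your proposal follows the paper's strategy closely: both arguments reduce weak boundedness to the linear isoperimetric inequality $\abs{\ot\alpha_\bR(c)} \le \norma{\partial c}_1$ on $\ot X$, decompose $\partial c$ into circuit $1$-cycles without cancellation, and conclude by combining Lemma~\ref{alpha_area} with Proposition~\ref{prop:isoperimetric}. The differences are at two points and are inessential. First, you re-derive the equivalence between $\iota^2[\alpha_\bR]=0$ and the isoperimetric inequality from scratch, by identifying $G$-equivariant $\ell^\infty(G,\bR)$-valued cochains on the cocompact complex $\ot X$ with globally bounded cochains and invoking Hahn--Banach to produce a bounded primitive $\beta$; the paper instead cites this equivalence as \cite[Theorem 5.1]{Mil2021}, a black box that you have usefully unpacked. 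Second, to decompose $\partial c$ you reduce to integer $1$-cycles and run an Eulerian-circuit argument, whereas the paper decomposes the \emph{real} $1$-cycle $\partial c$ directly as a non-negative real combination $\sum_i\lambda_i\ol{p_i}$ of circuit cycles with no cancellation, by induction on the number of $1$-cells in its support. Your reduction, phrased as ``clearing denominators,'' only takes you from rational cycles to integer ones; to pass from real cycles to rational ones you also need a density/continuity step (for instance: restrict $\ell$ and $\norma{\cdot}_1$ to the finite-dimensional subspace of $1$-cycles supported on the finitely many $1$-cells meeting $\partial c$, where every linear functional is continuous). That step is easily supplied but is currently missing, and the paper's direct real flow decomposition sidesteps it entirely. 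Aside from this small gap, the argument is correct.
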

\begin{proof}
    Recall from Section \ref{sec:cohomology} that the class $[\alpha_\bR]$ is weakly bounded if and only if its corresponding class in the singular cohomology $H^2(X;\bR)$ lies in the kernel of the change of coefficient map $\iota^2:H^2(X;\bR) \to H_\Linfs^2(X;\bR) = H^2(X;\ell^\infty(G,\bR))$.
    Here, we prefer to work directly with cellular cochains;
    since the diagram
    \[
    \begin{tikzcd}[contains/.style = {draw=none,"\in" description}]
    	H^2_\mathrm{CW}(X;\mathbb{R}) \ar[d,"\cong"] \ar[r,"\iota^2_\mathrm{CW}"] & H_\mathrm{CW}^2(X;\ell^\infty(G,\bR))  \ar[d,"\cong"]\\
    	H^2(X;\mathbb{R}) \ar[r,"\iota^2"] & H^2(X;\ell^\infty(G,\mathbb{R}))
    \end{tikzcd}
    \]
    commutes, we are left to show that $[\alpha_\bR]$ lies in the kernel of $\iota^2_\mathrm{CW}:H_\mathrm{CW}^2(X;\bR)\to H_\mathrm{CW}^2(X;\ell^\infty(G,\bR))$.
    Since $X$ has finite $1$-skeleton, to prove this claim we can use the following result from \cite{Mil2021}, that characterizes the kernel of $\iota^2_\mathrm{CW}$ in terms of a linear isoperimetric inequality: $\iota^2_\mathrm{CW}([\alpha_\bR]) = 0$ if and only if there is a constant $L \in \bR$ such that $\abs{\ot{\alpha}_\bR(c)} \le L\norma{\partial c}_1$ for every $c \in C_2^\mathrm{CW}(\ot{X};\bR)$.
    We proceed to show this inequality with $L = 1$.
    
    Let $c \in C_2^\mathrm{CW}(\ot{X};\bR)$.
    The boundary of $c$ is a cellular $1$-cycle, and we express it as
    \[\partial c\ =\ \lambda_1 \ol{p_1} + \lambda_2 \ol{p_2} + \dots + \lambda_k \ol{p_k},\]
    where each $\ol{p_i}$ is the $1$-cycle associated to a circuit $p_i$, the coefficients $\lambda_i$ are non-negative real numbers, and no cancellation occurs in the sum, i.e., $\norma{\partial c}_1 = \lambda_1 \len(p_1) + \dots + \lambda_k \len(p_k)$.
    Every $1$-cycle can be expressed in this way, as it is easy to prove by induction on the number of summands appearing in the linear combination of $1$-cells defining the cycle.

    Since $\ot{X}$ is simply connected, for every $i \in \{1,\dots,k\}$ there is a $2$-chain $c_i \in C_2^\mathrm{CW}(\ot{X};\bR)$ such that $\partial c_i = \ol{p_i}$.
    By linearity of the boundary operator, we have $\partial c = \partial(\lambda_1 c_1 + \dots + \lambda_k c_k)$.
Recall that $\alpha_\bR$ is exact (by Remark \ref{rmk:alpha-exact}), and thus we have $\ot{\alpha}_\bR(c) = \ot{\alpha}_\bR(\lambda_1 c_1 + \dots + \lambda_k c_k)$.
    
    Applying in order the triangle inequality, Lemma \ref{alpha_area} and Proposition \ref{prop:isoperimetric} we obtain that
    \begin{align*}
        \abs{\ot{\alpha}_\bR(c)}\ &=\ \abs{\ot{\alpha}_\bR(\lambda_1 c_1 + \dots + \lambda_k c_k)}\\
        &\le\ \lambda_1 \abs{\ot{\alpha}_\bR(c_1)} + \dots + \lambda_k \abs{\ot{\alpha}_\bR(c_k)}\\
        &=\ \lambda_1 \abs{\area(p_1)} + \dots + \lambda_k \abs{\area(p_k)}\\
        &\le\ \lambda_1 \len(p_1) + \dots + \lambda_k \len(p_k)\\
        &=\ \norma{\partial c}_1,
    \end{align*}
    which is the inequality we wanted to prove.
\end{proof}

\section{A 2-dimensional CAT(0) model}\label{sec:cat0}

Let $G$ be the group constructed in Section \ref{sec:example}.
In this section we build a finite $2$-dimensional piecewise-Euclidean simplicial complex which is locally CAT(0) and whose fundamental group is isomorphic to $G$, thus finishing the proof of Theorem \ref{mainthm}.

Recall that the group $G$ is defined as the amalgamated product $G = H*_\gen{b}H$, where
\begin{equation}\label{eq:presentation_H1}
    H = \pres{a,b,t}{tbt^{-1} = b, tat^{-1} = ba}.
\end{equation}
By introducing two auxiliary letters, we can equivalently write
\begin{equation}\label{eq:presentation_H}
    H = \pres{a,b,t,x,y}{by = t, ax = y, yb = t, xa = t}.
\end{equation}
In fact, from the first two relations in (\ref{eq:presentation_H}) we get $y = b^{-1}t$ and $x = a^{-1}b^{-1}t$; by substituting these expressions in the last two relations we recover (equivalent forms of) the relations in (\ref{eq:presentation_H1}).

\begin{myprop}\label{prop:fcat0}
	The group $G$ constructed in Section \ref{sec:example} is of type F and CAT(0).
\end{myprop}
\begin{proof}
We start by constructing a locally CAT(0) $2$-dimensional model for $K(H,1)$, that we call $Y$.
Then, we obtain the desired model for $K(G,1)$ from two copies of $Y$ and a flat cylinder, by gluing the boundary components of the cylinder along closed geodesics representing the element $b \in H$ in the two copies of $Y$.

We build $Y$ as a CW complex with one $0$-cell, five $1$-cells and four triangular $2$-cells.
We denote the $0$-cell by $v$, and label the five $1$-cells with the letters $a$,$b$,$t$,$x$ and $y$.
The four $2$-cells are glued as shown in Figure \ref{fig:triangoli}.
The fundamental group of $Y$ is isomorphic to $H$, since the four triangles in Figure \ref{fig:triangoli} precisely encode the relations appearing in the presentation (\ref{eq:presentation_H}).

\begin{figure}[ht]
    \centering
    \includegraphics{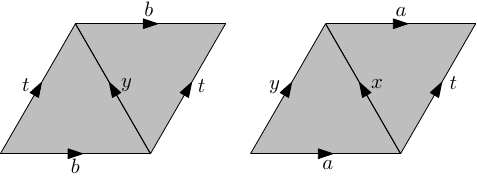}
    \caption{How to glue the $2$-cells of $Y$ to the $1$-skeleton.}
    \label{fig:triangoli}
\end{figure}

We obtain a simplicial complex by subdividing every $2$-cell of $Y$ as shown in Figure \ref{fig:subdivision} (here, we could consider the second barycentric subdivision, which is the standard way to obtain a simplicial complex from a $\Delta$-complex, but our choice makes the rest of the proof easier).
Then, we endow each $2$-simplex of the subdivision with the metric of a regular Euclidean triangle with side-length $1$, and consider the resulting path metric on $Y$.

\begin{figure}[ht]
    \centering
    \includegraphics{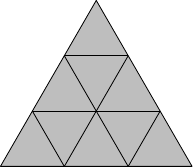}
    \caption{How to subdivide the $2$-cells of $Y$ to obtain a simplicial complex.}
    \label{fig:subdivision}
\end{figure}

We now check that $Y$, as a metric space, is locally CAT(0).
By \cite[Chapter II, Theorems 5.2 and 5.6]{BH1999}, $Y$ is locally CAT(0) if and only if its vertex links don't contain injective loops of length strictly less than $2\pi$.
The vertex links are $1$-dimensional simplicial complexes whose $1$-simplices all have length $\pi/3$, the latter being the amplitude of all angles of the $2$-simplices of $Y$.
For vertices distinct from $v$ (coming from the subdivision of a $1$-cell or $2$-cell), this conditions is very easily checked.
We now consider the link of $v$.

The $1$-simplices of the link of $v$ correspond to the angles of the triangles in Figure \ref{fig:triangoli}.
The resulting $1$-complex, drawn in Figure \ref{fig:vert_link}, has ten vertices and twelve edges. Every injective loop in it has at least six edges, and therefore its length is at least $2\pi$.
This implies that $Y$ is locally CAT(0).
In particular, the universal cover of $Y$ is CAT(0), hence contractible, and $Y$ is a model for $K(H,1)$.

\begin{figure}[ht]
    \centering
    \includegraphics{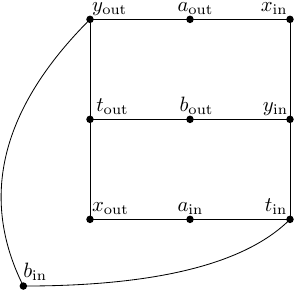}
    \caption{The link of $v$.
    It has ten vertices: every $1$-cell of $Y$ (before the subdivision) gives rise to two vertices, corresponding to its tail ($\null_\mathrm{out}$) and its head ($\null_\mathrm{in}$).}
    \label{fig:vert_link}
\end{figure}

Consider now the model for $K(G,1)$ obtained by taking two disjoint copies of $Y$ and gluing the boundary components of a cylinder to the $1$-cells labeled with $b$, one in each copy of $Y$.
We triangulate the cylinder as in Figure \ref{fig:triang_cyl}, and endow each $2$-simplex with the metric of a regular Euclidean triangle with side-length $1$.
The lower and upper sides are glued isometrically to the $1$-cell $b$ of the first and second copies of $Y$, respectively (recall that $b$ has been subdivided in three $1$-simplices).

\begin{figure}[ht]
    \centering
    \includegraphics{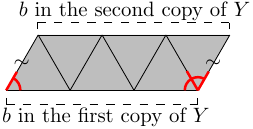}
    \caption{How to triangulate the cylinder. The edges and angles that are marked give rise, respectively, to two new vertices and three new edges in the link of $v$ in the first copy of $Y$.}
    \label{fig:triang_cyl}
\end{figure}

The link of $v$ changes as follows: two vertices and three edges are added, forming a path from $b_\mathrm{out}$ to $b_\mathrm{in}$ of length $\pi$.
After this operation, every loop still has length at least $2\pi$, since the minimal paths between $b_\mathrm{in}$ and $b_\mathrm{out}$ already had length at least $\pi$.
Hence, the resulting model for $K(G,1)$ is locally CAT(0), as desired.
\end{proof}

We use the fact that $G$ has a finite simplicial model to construct an aspherical counterexample to Gromov's conjecture, thus proving Corollary \ref{cor:false-conjecture}.
We start with the following observation.
\begin{mylemma}\label{lemma:retract}
	Let $r:G_2\to G_1$ be a group retraction.
	Suppose that $x \in H^2(G_1;\mathbb{Z})$ is weakly bounded and not bounded.
	Then the same is true for $r^*x \in H^2(G_2;\mathbb{Z})$.
\end{mylemma}
\begin{proof}
	It follows from the definitions that boundedness and weak boundedness are preserved by pull-backs.
	Therefore, $r^*x$ is weakly bounded.
	Let $i:G_1 \to G_2$ be an injective homomorphism such that $r \circ i: G_1 \to G_1$ is the identity of $G_1$.
	If $r^*x$ were bounded, then $i^*r^*x = x$ would be bounded.
\end{proof}
	
\begin{proof}[Proof of Corollary \ref{cor:false-conjecture}]
    Let $N$ be a compact smooth manifold with boundary which is homotopy equivalent to the finite simplicial model for $K(G,1)$ constructed in the proof of Proposition \ref{prop:fcat0}.
    In particular, $N$ is aspherical and its fundamental group is isomorphic to $G$.
    
    For example, such a manifold can be obtained by replacing $k$-dimensional simplices with $n$-dimensional $k$-handles, the result being a manifold of dimension $n$ (in our case, we have to take $n \ge 4$ to make the procedure work):
    one starts with disjoint $n$-discs (the $0$-handles) corresponding to the vertices of the simplicial complex; then, for every $1$-simplex, a $1$-handle is attached, joining the two $0$-handles corresponding to the endpoints of the simplex; finally, for every $2$-simplex $\sigma$, a $2$-handle is attached along a circle contained in the (boundary of the) union of the $0$- and $1$-handles corresponding to the faces of $\sigma$.
    More precisely, this union is diffeomorphic to $S^1\times D^{n-1}$, and the attaching circle is taken homotopic to $S^1\times\{*\}$.
    As long as $n \ge 4$, the attaching circles can be made disjoint by a transversality argument, and the procedure gives the desired manifold $N$.
    It follows from an application of \cite[Theorem 7.5.7]{Brown1968} that $N$ is homotopy equivalent to the original simplicial complex. 
    
    Now, we apply the Davis' reflection group trick described in \cite[\S 11.1]{Davis2008} to $(N,\partial N)$, obtaining a closed smooth manifold $M$ together with continuous maps $\iota:N \to M$ and $r:M \to N$ whose composition $r \circ \iota:N \to N$ is the identity.
    
    Let $[\alpha] \in H^2(G;\bZ) \cong H^2(\pi_1(N);\bZ)$ be the weakly bounded but not bounded class considered in Section \ref{sec:example}.
    By Lemma \ref{lemma:retract}, also $r^*[\alpha] \in H^2(\pi_1(M);\bZ)$ is weakly bounded and not bounded.
    By \cite[Corollary 20]{FS2020} it follows that $M$ (endowed with an arbitrary Riemannian metric) has a $2$-form $\omega$ that satisfies the two conditions in the statement.
\end{proof}

Belegradek proved in \cite{Belegradek2006} that for any closed aspherical (smooth or PL) $n$-manifold $M$ there is a closed aspherical $(n+1)$-manifold $M'$ in the same category such that $M'$ retracts onto $M$ and $\pi_1(M')$ is hyperbolic relative to $\pi_1(M)$.
By applying this result to the manifold $M$ of Corollary \ref{cor:false-conjecture}, we obtain another aspherical counterexample to Gromov's conjecture, with the additional property that its fundamental group is non-elementary relatively hyperbolic.
This also proves Corollary \ref{cor:rel-hyp}.

\bibliographystyle{fram_plain}
\bibliography{bibliography.bib}

\end{document}